
\documentclass[]{interact}

\usepackage{epstopdf}
\usepackage[caption=false]{subfig}
\usepackage{wrapfig}
\usepackage{graphicx}
\usepackage[longnamesfirst,sort]{natbib}
\bibpunct[, ]{(}{)}{;}{a}{,}{,}


\theoremstyle{plain}
\newtheorem{theorem}{Theorem}[section]
\newtheorem{lemma}[theorem]{Lemma}

\theoremstyle{definition}
\newtheorem{definition}[theorem]{Definition}

\theoremstyle{remark}
\newtheorem{remark}{Remark}

\newtheorem{assumption}{Assumption}
\begin{document}


\title{Performance Bounds for  Nash Equilibria in Submodular Utility Systems with User Groups}

\author{
\name{Yajing Liu\textsuperscript{a}\thanks{CONTACT Yajing Liu. Email: yajing.liu@ymail.com},  Edwin K. P. Chong\textsuperscript{b} and Ali Pezeshki\textsuperscript{b}}
\affil{\textsuperscript{a} Department of Electrical and Computer Engineering, Colorado State University, Fort~Collins, USA; \textsuperscript{b} Department of Electrical and Computer Engineering \& \\Department of Mathematics, Colorado State University, Fort Collins, USA}
}

\maketitle

\begin{abstract}
\cite{Vetta2002} shows that for a valid non-cooperative utility system, if the social
utility function is submodular, then any Nash equilibrium
achieves at least $1/2$ of the optimal social utility, subject to a
function-dependent additive term. Moreover, if the social
utility function is nondecreasing and submodular, then any
Nash equilibrium achieves at least $1/(1+c)$ of the optimal social
utility, where $c$ is the curvature of the social utility function. In this paper, we consider variations of the utility system considered by Vetta, in which users are grouped together. Our aim is to establish how grouping and cooperation among users affect performance bounds.  We consider two types of grouping. The first type is from  \cite{Zhang2014}, where each user
belongs to a group of users having social ties with it. For this type of
utility system, each user's strategy maximizes its social group utility
function, giving rise to the notion of \emph{social-aware Nash
equilibrium}.  We prove that this social utility system yields to the bounding results of Vetta  for non-cooperative system, thus establishing provable performance guarantees for the  social-aware Nash equilibria. 
For the second type of grouping we consider, the set of users is
partitioned into $l$ disjoint groups, where the users within a group
cooperate to maximize their group utility function, giving rise to the
notion of \emph{group Nash equilibrium}. In this case, each group can be viewed as a new user with vector-valued actions, and a 1/2 bound for the performance of group Nash equilibria follows from the result of Vetta. But as we show tighter bounds involving curvature can be established.
 By defining the group
curvature $c_{k_i}$ associated with group $i$ with $k_i$ users, we
show that if the social utility function is nondecreasing and
submodular, then any group Nash equilibrium achieves at least
$1/(1+\max_{1\leq i\leq l}c_{k_i})$ of the optimal social utility, which is tighter than that for the case without grouping. As a special case, if each user has the same action space, then we have that any group Nash equilibrium achieves at least $1/(1+c_{k^*})$ of the optimal social utility, where $k^*$ is the least number of users among the $l$ groups.
Finally, we present an example of a utility system for database assisted
spectrum access to illustrate our results.
\end{abstract}
\begin{keywords}
Group Nash equilibrium; social-aware Nash equilibrium;  submodularity; utility system
\end{keywords}

\section{Introduction}

A variety of interesting practical problems can be posed as utility maximization problems: these include facility location (e.g., \cite{Atamturk2011}), traffic routing and congestion management (e.g., \cite{Marden2007} and \cite{Rexford2007}), sensor selection (e.g., \cite{Portal2007} and \cite{Chong2014}), and network resource allocation (e.g., \cite{Anantharam2002}, \cite{Chiang2007}, \cite{Zhang2014}, and \cite{Lu2012}). In a utility maximization problem, a set of users make decisions according to their own set of feasible strategies, resulting in an overall social utility value, such as profit, coverage, achieved data rate, and quality of service. The goal is to maximize the social utility function. Often, the users do not cooperate in selecting their strategies.

In general, it is impractical to find the globally optimal sequence (finite, ordered collection) of strategies maximizing the social utility function. Typically, it is more useful to consider scenarios where individual users or groups of users separately maximize their own \emph{private} objective functions, and then ask how this compares with the globally optimal case. The usual framework for studying such scenarios is game theory together with its celebrated notion of Nash equilibria.
A \emph{Nash equilibrium} is a sequence of strategies (deterministic or randomized) for which no user can improve its own private utility by changing its strategy unilaterally. \cite{Nash1951} proves that any finite and non-cooperative game has at least one Nash equilibrium. 

The question of how the Nash solution compares with the globally optimal solution is one of the most challenging problems in game theory and has received significant attention in the literature (see the survey by \cite{Papadimitriou2001}). For example, results have been reported by \cite{Papadimitriou2009} and \cite{Tardos2002} in the context of traffic routing and congestion management, which aims to minimize the total latency. For a general utility maximization problem, \cite{Vetta2002} develops lower bounds on the worst-case social utility value in non-cooperative games. Specifically, he proves that for a \emph{submodular} social utility function in a \emph{valid} utility system, any Nash equilibrium achieves at least $1/2$ of the optimal social utility value, subject to a function-dependent additive term. He also shows that for a nondecreasing and submodular social utility function in a valid utility system, any Nash equilibrium can achieve at least $1/(1+c)$ of the optimal social utility function value, where $0\leq c\leq 1$ is the \emph{curvature} of the social utility function. 

With the advent of social networks, there is increasing interest in understanding the role of cooperation and social ties in games (see, e.g., the recent paper by \cite{Allen2017}). In our paper, we are interested in exploring bounds for Nash equilibria when there is some notion of ``grouping'' among users. Along these lines, we consider two notions of grouping that yield to provable performance bounds. The first type of grouping we consider is the recent framework of  \cite{Zhang2014}, where associated with each user is a private objective function and a fixed group of users having some social ties with it. Each user's strategy maximizes an objective function called the \emph{social group utility}, which is the sum of its private objective function and a linear combination of the private objective functions of users in its group. Within this setting, \cite{Zhang2014} define what they call a \emph{social-aware Nash equilibrium}, where no user can improve its social group utility by unilaterally changing its strategy.  We will show that this framework yields to the bounding results of \cite{Vetta2002} for noncooperative games, thus establishing provable performance guarantees for the framework of \cite{Zhang2014}.

In the second type of grouping we consider, the set of users is partitioned into disjoint groups. Associated with each group is a group utility function. Users within a group \emph{cooperate} in the sense that their strategy is to (jointly) maximize the group utility function, giving rise to a natural definition of \emph{group Nash equilibrium}. 
Although we can view each group as a new user with vector-valued actions so that a similar $1/2$ bound to the result of \cite{Vetta2002} holds, we would like to investigate the performance bound for the group Nash equilibrium in terms of curvature and compare it with the case where there is no grouping. We define a measure of \emph{group curvature} and derive an associated lower bound involving this curvature. We prove that this bound is tighter than that for the case without grouping among users, accounting for the cooperation within the groups. We also prove that, under the condition that each user has the same action space, the higher the degree of cooperation, the tighter the lower bound.

The remainder of the paper is organized as follows. In Section~2, we  introduce our notation and some definitions that will be used throughout the paper.
In Section~3, we review the bounding results of \cite{Vetta2002}. In Section~4, we first describe the framework of \cite{Zhang2014} and show that a social-aware utility system yields to the bounding results of Vetta  for non-cooperative system, thus establishing provable performance guarantees for the  social-aware Nash equilibrium. Next, we describe our second type of grouping involving $l$ disjoint groups with in-group cooperation. In this case, each group can be viewed as a new user with vector-valued actions, and a 1/2 bound for the performance of group Nash equilibrium follows from the result of \cite{Vetta2002}. We then define the group curvature $c_{k_i}$ associated with group $i$ with $k_i$ users, and we show that if the social utility function is nondecreasing and submodular, then any group Nash equilibrium achieves at least $1/(1+\max_{1\leq i\leq l}c_{k_i})$ of the optimal social utility, which is tighter than that for the case without grouping.  Especially, if each user has the same action space, then we have that any group Nash equilibrium achieves at least $1/(1+c_{k^*})$ of the optimal social utility, where $k^*$ is the least number of users among all the groups. In Section~5, we present an example of a utility system for database assisted spectrum access, adopted from \cite{Zhang2014}. We show that the utility system for this example is valid and the social utility function is submodular, illustrating an application of our results.

\section{Preliminaries}

In this section, we first introduce notation and a number of definitions used throughout the paper.
\subsection{Actions}

Suppose we have a set  $\mathcal{N}=\{1,2,\ldots,N\}$ of $N$ users and ground sets $V_1,V_2,\ldots,V_N$, where each element in $V_i$ denotes an \emph{act} that user $i$ can take. We call a set of acts an \emph{action}, and if an action $x_i\subseteq V_i$ is available to user $i$ we call it a \emph{feasible action}. We denote by $\mathcal{X}_i$ the set of all feasible actions for user $i$, i.e., $\mathcal{X}_i=\{x_i\subseteq V_i: x_i$ is a feasible action$\}$, with $n_i=|\mathcal{X}_i|$ the cardinality of $\mathcal{X}_i$.

Let $\mathcal{X}=\prod_{i=1}^N\mathcal{X}_i$ and $X=(x_{i_1},\ldots,x_{i_k})$, where $x_j\in\mathcal{X}_j$, with $i_1\leq j\leq i_k$. We call $X$ an \emph{action sequence} of length $k$ in $\mathcal{X}$. This sequence includes the actions taken by users $i_1,\ldots, i_k$ in order. Given an action sequence $X$, suppose $Y$ is formed by removing some of the elements of $X$ without changing the order of the remaining elements. Then, we call the derived action sequence $Y$ a \emph{subsequence} of $X$ and denote this relation by $Y\subseteq X$. This follows the definition of a subsequence in \cite{Smith2015}.   

Consider an action sequence $X=(x_1,\ldots,x_N)\in\mathcal{X}$. Then, $X_{-i}=(x_1,\ldots,x_{i-1},x_{i+1},\ldots,x_N)$ is the subsequence of $X$ that includes actions taken by all users except user $i$. We use $(X_{-i},x_{i}')$ to denote the action sequence $(x_1,\ldots,x_{i-1},x_i',x_{i+1},\ldots,x_N)$ that results from $X$ when user $i$ changes its action from $x_i$ to $x_i'$. 

Given action sequences $Y=(y_{i_1},\ldots, y_{i_k})$  and $Z=(z_{j_1},\ldots, z_{j_l})$, we define $Y\oplus Z=(y_{i_1},\ldots, y_{i_k},z_{j_1},\ldots, z_{j_l})$ as the concatenation of $Y$ and $Z$ when $i_p\neq j_q$ for  $1\leq p\leq k$ and $1\leq q \leq l$ 
(following the notation in \cite{Zhang2016}).

\subsection{Strategies}

Let $s_i=(s_i^1,\ldots, s_i^{n_i})$, where $s_i^j\ge 0$ is the probability with which user $i$ takes action $j$ and  $\sum_{j=1}^{n_i}s_i^j=1$. Following the terminology of \cite{Vetta2002}, we call $s_i$ a \emph{strategy} taken by user $i$. When $s_i^j=1$ and $s_i^l=0$ for $1\leq j\leq n_i$ and $l\neq j$, we say that user $i$ takes a \emph{pure strategy}. Otherwise, we say that user $i$ takes a \emph{mixed strategy}.

Let $\mathcal{S}_i=\{s_i\in\mathbb{R}_i^{n_i}:\sum_{j=1}^{n_i}s_i^j=1,s_i^j\geq 0\}$ be the strategy space for user $i$
and $\mathcal{S}=\prod_{i=1}^N\mathcal{S}_i$. Similar to the definition of an action sequence, we call $S=(s_{i_1},\ldots, s_{i_k})$, with $s_j\in\mathcal{S}_j$ and $i_1\leq j\leq i_k$, a \emph{strategy sequence} of length $k$ in $\mathcal{S}$. Then a subsequence $T$ of $S$ is a sequence derived from $S$ by deleting some elements without changing the order of the remaining elements. We define $S_i=(s_1,\ldots, s_i)$, for $1\leq i\leq N$, as a sequence of strategies taken by users $1,\ldots, i$.

Given a strategy sequence $S=(s_1,\ldots,s_N)\in\mathcal{S}$, the sequence $S_{-i}=(s_1,\ldots,s_{i-1},s_{i+1},\ldots,s_N)$ is the subsequence of $S$ that contains strategies taken by all users except user $i$, and $(S_{-i},s_{i}')=(s_1,\ldots,s_{i-1},s_i',s_{i+1},\ldots,s_N)$ is the strategy sequence that results from $S$ when user $i$ changes its strategy from $s_i$ to $s_i'$. 

Given strategy sequences $T=(t_{i_1},\ldots, t_{i_k})$  and $W=(w_{j_1},\ldots, w_{j_l})$, we write $T\oplus W=(t_{i_1},\ldots, t_{i_k},w_{j_1},\ldots, w_{j_l})$ for the concatenation of $T$ and $W$ when $i_p\neq j_q$ for $1\leq p\leq k$ and $1\leq q \leq l$.

\subsection{Utility Functions}

We define the \emph{social utility} function as a mapping $\gamma$ from sequences in $\mathcal{X}$ to real numbers, and the \emph{private utility} function for user $i $ $(1\leq i\leq N)$ as a mapping $\alpha_i$ from sequences in $\mathcal{X}$ to real numbers.
Correspondingly, we define $\bar{\gamma}$ and $\bar{\alpha}_i$ as mappings, from sequences in $\mathcal{S}$ to real numbers, that correspond to the expectations of $\gamma$ and $\alpha_i$, respectively. We call $\bar{\gamma}$ the \emph{expected social utility function} and $\bar{\alpha_i}$ the \emph{expected private utility function} for user $i$. We also define 
$\gamma_Z(Y)=\gamma(Y\oplus Z)-\gamma(Y)$ for any $Y, Z$ in $\mathcal{X}$ such that $Y\oplus Z$ is well defined, and $\bar{\gamma}_W(T)=\bar{\gamma}(T\oplus W)-\bar{\gamma}(T)$ for any $T, W$ in $\mathcal{S}$ such that $T\oplus W$ is defined. 

We denote by $\Omega$ the optimal sequence of strategies in maximizing an expected utility function $\bar{\gamma}$, and assume that $\Omega=(\sigma_1,\ldots,\sigma_N)$ is composed of pure strategies $\sigma_i\in\mathcal{S}_i$, $i=1,\ldots, N$. For convenience, we also use $\sigma_i$ to denote the optimal action that user $i$ takes. Then, we have that the optimal value of $\bar{\gamma}$, denoted by OPT, is OPT~$=\bar{\gamma}(\Omega)=\gamma(\Omega)$. 

\subsection{Curvature, Monotoneity, and Submodularity}

Given a strategy sequence $S_i=(s_1,\ldots,s_i)$ for $1\leq i\leq N$, we use the notation $\Omega\cup S_i$ to represent the sequence in which user $j$ $(1\leq j\leq i)$ implements the actions $\sigma_j\cup x_j^1,\ldots,\sigma_j\cup x_j^{n_j}$ with probabilities $s_j^1,\ldots,s_j^{n_j}$, and user $j$ $(j>i)$ plays the action $\sigma_j$, so $\bar{\gamma}(\Omega\cup S_i)$ is well defined.
Then the curvature $c$ of the expected social utility function $\bar{\gamma}$ is defined as 
\[c=\max_{i:\bar{\gamma}_{s_i}(\emptyset)\neq 0}\left\{1-\frac{\bar{\gamma}_{s_i}(\Omega\cup S_{-i})}{\bar{\gamma}_{s_i}(\emptyset)}\right\}.\] 

The social utility function $\gamma$ is called \emph{nondecreasing} if for all subsequences $Y$ of a sequence $X$ in $\mathcal{X}$, i.e., $Y\subseteq X$ in $\mathcal{X}$, $f(Y)\leq f(X)$. It is called \emph{submodular} if for all $Y\subseteq X$ and $Z$ in $\mathcal{X}$ such that $X\oplus Z$ is defined, we have $\gamma_Z(Y)\geq \gamma_Z(X)$. Our terminology here is consistent with that of \cite{Smith2015}. Because $\bar{\gamma}$ is the expected value of $\gamma$, we have that if $\gamma$ is nondecreasing and submodular, then $\bar{\gamma}$ is also nondecreasing and submodular, respectively. So in the following sections, when we say that $\gamma$ is nondecreasing and submodular, it implies that $\bar{\gamma}$ is nondecreasing and submodular, respectively.

\section{Performance Bounds for Nash Equilibria}

In this section, we first review the definitions of a Nash equilibrium and a valid utility system from \cite{Vetta2002}. We then review the bounds derived in \cite{Vetta2002} for the performance of any Nash equilibrium.

\begin{definition}
\label{dfn:NE}
A strategy sequence $S\in\mathcal{S}$ is a \emph{Nash equilibrium} if no user has an incentive to unilaterally change its strategy, i.e., for any user $i$, 
\begin{equation}
\label{ineq:NE}
\bar{\alpha}_i(S)\geq \bar{\alpha}_i((S_{-i}, s_i')),\quad \forall s_i'\in\mathcal{S}_i.
\end{equation}
\end{definition}
\medskip
\begin{assumption}
\label{assumption1}
\cite{Vetta2002} The private utility of user $i$ ($1\leq i\leq N$) is at least as large as the loss in the social utility resulting from user $i$ dropping out of the game. That is, the system ($\bar{\gamma}, \{\bar{\alpha}_i\}_{i=1}^N$) has the property  that  for any strategy sequence $S=(s_1,\ldots, s_N)\in\mathcal{S}$,
\begin{equation}
\label{assum1}
\bar{\alpha}_i(S)\geq \bar{\gamma}_{s_i}(S_{-i}),\quad \forall 1\leq i\leq N.
\end{equation}
\end{assumption}

\medskip
\begin{assumption}
\label{assumption2}
\cite{Vetta2002} The sum of the private utilities of the system is not larger than the social utility, i.e., for any strategy sequence $S=(s_1,\ldots, s_N)\in\mathcal{S}$,
\begin{equation}
\label{assum2}
\sum\limits_{i=1}^N\bar{\alpha}_i(S)\leq\bar{\gamma}(S) .
\end{equation}
\end{assumption}

A utility system $(\gamma, \{\alpha_i\}_{i=1}^N)$ satisfying Assumptions~\ref{assumption1} and~\ref{assumption2} is called a \emph{valid} system. Given $X\in\mathcal{X}$, if for any $ 1\leq i\leq N$, the inequalities $\alpha_i(X)\geq\gamma_{x_i}(X_{-i})$  and $\sum_{i=1}^N\alpha_i(X)\leq \gamma(X)$ hold, then the inequalities (\ref{assum1}) and (\ref{assum2}) hold.
\begin{theorem}
\label{thm:NE}
\cite{Vetta2002} For a valid utility system $(\gamma, \{\alpha_i\}_{i=1}^N)$, if the social utility function $\gamma$ is submodular, then for any Nash equilibrium $S\in\mathcal{S}$ we have 
\begin{equation}
\label{ineq:NE}
\bar{\gamma}(S)\geq\frac{1}{2}\left(\bar{\gamma}(\Omega)+\sum_{i=1}^N\bar{\gamma}_{s_i}({S_{-i}\cup \Omega})\right).
\end{equation}
\end{theorem}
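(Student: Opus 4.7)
The plan is to adapt the two-sided telescoping argument from Vetta's original proof. The structure has three stages: (i) combine the Nash condition with the validity assumptions to bound $\bar{\gamma}(S)$ below by $\sum_i \bar{\gamma}_{\sigma_i}(S_{-i})$; (ii) use submodularity, telescoped along the optimal actions, to show that this same sum upper bounds $\bar{\gamma}(\Omega\cup S) - \bar{\gamma}(S)$, yielding $2\bar{\gamma}(S) \geq \bar{\gamma}(\Omega\cup S)$; (iii) use submodularity once more, telescoped the other way along the equilibrium mixes, to expand $\bar{\gamma}(\Omega\cup S)$ as $\bar{\gamma}(\Omega) + \sum_i \bar{\gamma}_{s_i}(\Omega\cup S_{-i})$, which combined with (ii) gives the claim.

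For stage (i), the Nash inequality applied to the pure strategy $\sigma_i$ gives $\bar{\alpha}_i(S) \geq \bar{\alpha}_i((S_{-i}, \sigma_i))$. Applying Assumption~\ref{assumption1} to the sequence $(S_{-i}, \sigma_i)$ yields $\bar{\alpha}_i((S_{-i}, \sigma_i)) \geq \bar{\gamma}_{\sigma_i}(S_{-i})$, so $\bar{\alpha}_i(S) \geq \bar{\gamma}_{\sigma_i}(S_{-i})$ for each $i$. Summing over $i$ and applying Assumption~\ref{assumption2} delivers
\[
\bar{\gamma}(S) \;\geq\; \sum_{i=1}^N \bar{\alpha}_i(S) \;\geq\; \sum_{i=1}^N \bar{\gamma}_{\sigma_i}(S_{-i}).
\]

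For stage (ii), I would add the optimal actions $\sigma_1,\ldots,\sigma_N$ one user at a time on top of $S$, producing intermediate sequences $T_0 = S$, $T_N = \Omega\cup S$, and expanding
\[
\bar{\gamma}(\Omega\cup S) - \bar{\gamma}(S) \;=\; \sum_{i=1}^N \bigl(\bar{\gamma}(T_i) - \bar{\gamma}(T_{i-1})\bigr).
\]
Each increment is the marginal gain from augmenting user $i$'s action with $\sigma_i$ at $T_{i-1}$. Since $S_{-i}$ is contained in $T_{i-1}$, submodularity forces each such increment to be at most $\bar{\gamma}_{\sigma_i}(S_{-i})$. Combining with stage (i) yields $2\bar{\gamma}(S) \geq \bar{\gamma}(\Omega\cup S)$.

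For stage (iii), I would telescope in the opposite direction by adjoining the equilibrium mixes $s_1,\ldots,s_N$ one at a time on top of $\Omega$. The $i$th increment is the gain from adjoining $s_i$ to a sequence in which users $1,\ldots,i-1$ have already adopted their $\sigma_j\cup x_j$-style mixed actions while users $i,\ldots,N$ still play the pure $\sigma_j$; this base sequence is contained in $\Omega\cup S_{-i}$, so submodularity bounds each increment below by $\bar{\gamma}_{s_i}(\Omega\cup S_{-i})$. Summing gives $\bar{\gamma}(\Omega\cup S) \geq \bar{\gamma}(\Omega) + \sum_i \bar{\gamma}_{s_i}(\Omega\cup S_{-i})$, and substituting into the stage~(ii) inequality produces the stated bound. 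The main obstacle I anticipate is careful bookkeeping of the $\cup$ operator and the subsequence relation when mixed strategies are involved, so that each invocation of submodularity in the sense of \cite{Smith2015} is applied to two sequences standing in the required $Y\subseteq X$ relation; the algebra is routine once the containments $S_{-i}\subseteq T_{i-1}$ and the analogue for the stage~(iii) telescope are verified at the sequence level.
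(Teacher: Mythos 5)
Your proposal is correct and follows essentially the standard route: the paper itself states this theorem as a citation to Vetta (2002) without reproducing a proof, but the in-paper proof of the group analogue (Lemma~\ref{lemma1} together with Theorem~\ref{thm:GNE}) uses exactly your three-stage architecture — Nash condition plus Assumptions~\ref{assumption1}--\ref{assumption2} to get $\bar{\gamma}(S)\geq\sum_i\bar{\gamma}_{\sigma_i}(S_{-i})$, a submodular telescoping up to $\bar{\gamma}(\Omega\cup S)$, and a reverse telescoping down to $\bar{\gamma}(\Omega)$. The only cosmetic difference is that the paper keeps the second telescoping as an exact identity at the intermediate sequences $S^{(i-1)}\cup\Omega$ and applies submodularity afterward to pass to $\Omega\cup S^{-i}$, whereas you fold that submodularity step into the telescoping itself; the content is identical.
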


If $\gamma$ is non-decreasing, then $\bar{\gamma}_{s_i}({S_{-i}\cup \Omega})\geq 0$ and the above inequality shows that any Nash equilibrium achieves at least $1/2$ of the optimal social utility function value.

\begin{theorem}
\label{thm:NEC}
\cite{Vetta2002} For a valid utility system $(\gamma, \{\alpha_i\}_{i=1}^N)$, if  the social utility function $\gamma$ is nondecreasing and submodular, then for any Nash equilibrium $S\in\mathcal{S}$ we have 
\begin{equation}
\label{ineq:NEC}
\bar{\gamma}(S)\geq\frac{1}{1+c}\bar{\gamma}(\Omega).
\end{equation}
\end{theorem}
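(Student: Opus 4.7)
The plan is to chain Theorem~\ref{thm:NE} with the curvature bound. From Theorem~\ref{thm:NE} I already have
\[
\bar{\gamma}(S)\geq\frac{1}{2}\bar{\gamma}(\Omega)+\frac{1}{2}\sum_{i=1}^N\bar{\gamma}_{s_i}(S_{-i}\cup\Omega),
\]
so the task reduces to a single inequality: $\sum_{i=1}^N\bar{\gamma}_{s_i}(S_{-i}\cup\Omega)\geq(1-c)\bar{\gamma}(S)$. Unwinding the definition of $c$ gives, for each $i$ with $\bar{\gamma}_{s_i}(\emptyset)\neq 0$, the pointwise bound $\bar{\gamma}_{s_i}(\Omega\cup S_{-i})\geq(1-c)\bar{\gamma}_{s_i}(\emptyset)$; for indices with $\bar{\gamma}_{s_i}(\emptyset)=0$, monotonicity and submodularity squeeze $\bar{\gamma}_{s_i}(\Omega\cup S_{-i})$ between $0$ and $\bar{\gamma}_{s_i}(\emptyset)=0$, so the same inequality holds trivially. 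Summing gives $\sum_{i=1}^N\bar{\gamma}_{s_i}(S_{-i}\cup\Omega)\geq(1-c)\sum_{i=1}^N\bar{\gamma}_{s_i}(\emptyset)$.

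Next I would show $\sum_{i=1}^N\bar{\gamma}_{s_i}(\emptyset)\geq\bar{\gamma}(S)$ by telescoping combined with submodularity. Writing $S_0=\emptyset$ and $S_i=(s_1,\ldots,s_i)$, one has $\bar{\gamma}(S)=\sum_{i=1}^N[\bar{\gamma}(S_i)-\bar{\gamma}(S_{i-1})]=\sum_{i=1}^N\bar{\gamma}_{s_i}(S_{i-1})$. Since $\emptyset$ is a subsequence of $S_{i-1}$, submodularity gives $\bar{\gamma}_{s_i}(\emptyset)\geq\bar{\gamma}_{s_i}(S_{i-1})$ for every $i$, and summing yields the desired inequality.

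Substituting both pieces into the Theorem~\ref{thm:NE} bound yields
\[
\bar{\gamma}(S)\geq\frac{1}{2}\bar{\gamma}(\Omega)+\frac{1-c}{2}\bar{\gamma}(S),
\]
and rearranging delivers $\bar{\gamma}(S)\geq\bar{\gamma}(\Omega)/(1+c)$, which is the claim. The main obstacle, as far as I can see, is the bookkeeping around the edge case $\bar{\gamma}_{s_i}(\emptyset)=0$ in the curvature definition, since the maximum defining $c$ is taken only over indices where this quantity is nonzero; I would dispatch it by the trivial-vanishing argument above so that the pointwise inequality extends uniformly over $i$. Everything else is a direct appeal to Theorem~\ref{thm:NE} and to the monotonicity and submodularity of $\bar{\gamma}$.
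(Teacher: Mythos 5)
Your proof is correct, and it is essentially the argument this paper itself uses: Theorem~\ref{thm:NEC} is stated here as a citation of Vetta, but the paper's proof of the group analogue (Theorem~\ref{thm:GNEC}, via Theorem~\ref{thm:GNE}, the pointwise curvature bound $\bar{\gamma}_{s^i}(\Omega\cup S^{-i})\geq(1-c_{k_i})\bar{\gamma}_{s^i}(\emptyset)$, and the telescoping-plus-submodularity bound of Lemma~\ref{lemma2}) is exactly your chain specialized to singleton groups. Your handling of the $\bar{\gamma}_{s_i}(\emptyset)=0$ edge case and the implicit use of $c\leq 1$ (which follows from monotonicity and submodularity, as the paper notes) are both sound.
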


When the social utility function $\gamma$ is nondecreasing and submodular, we have $c\in[0,1]$, which implies that $
\bar{\gamma}(S)\geq \bar{\gamma}(\Omega)/2$.

\section{Nash Equilibria Based on User Groups}

\subsection{Social-Aware Nash Equilibria}

In this section, we first introduce the social group utility maximization system and the social-aware Nash equilibrium defined in \cite{Zhang2014}. Then, we show that the results of \cite{Vetta2002} are directly applicable to bounding the performance of any social-aware Nash equilibrium.

In \cite{Zhang2014}, each user belongs to a group and aims to maximize its social group utility instead of its private utility. Each group is formed based on social ties between users and may reflect friendship, kinship, college relationship, etc. The social group utility for user $i$ (a mapping from $\mathcal{X}$ to real numbers) is defined as 
\[\eta_i=\alpha_i+\sum\limits_{m\in\mathcal{N}_i^s}\omega_{im}\alpha_m\]
where $\alpha_i$'s are private utilities, $\mathcal{N}_i^s$ is the set of all users having a social tie with user $i$, and $w_{im}$'s are weight parameters that reflect the strengths of social ties between user $i$ and  the users in $\mathcal{N}_i^s$, and $w_{im}\in[0,1]$. Correspondingly, the expected group utility $\bar{\eta}_i$ for user $i$, mapping from sequences in $\mathcal{S}$ to real numbers, is the expected value of $\eta_i$. 

\begin{definition}
\label{dfn:SNE}
\cite{Zhang2014} A strategy sequence $S=(s_1,\ldots, s_N)\in\mathcal{S}$ is a \emph{social-aware Nash equilibrium} if no user can improve its group utility by unilaterally changing its strategy, i.e., for any group $i$,
\begin{equation}
\label{ineq:SNE}
\bar{\eta}_i(S)\geq \bar{\eta}_i((S_{-i}, s_i')), \quad \forall s_i'\in \mathcal{S}_i.
\end{equation}
\end{definition}

\medskip
By comparing the definition of a Nash equilibrium and a social-aware Nash equilibrium, we see that the only difference between them is  that one is defined based on expected private utility functions and the other based on expected group utility functions. But because in \cite{Zhang2014}, each user has its own group utility function, and therefore its own expected group utility function, then  the results of \cite{Vetta2002} (in particular Theorem~1 and Theorem~2) directly apply to bound the performance of the social-aware Nash equilibrium of \cite{Zhang2014}. We prove in Theorem~3 and Theorem~4 that this is in fact the case, if the social group utility system $(\gamma, \{\eta_i\}_{i=1}^N)$ is valid.
A social group utility system $(\gamma, \{\eta_i\}_{i=1}^N)$ is valid if it satisfies the following assumptions, which are counterparts of Assumption~\ref{assumption1} and Assumption~\ref{assumption2} with expected group utilities standing in for expected private utilities.

\begin{assumption}
\label{assumption3}
The group utility of user $i$ $(1\leq i\leq N)$ is at least as large as the loss in the social utility resulting from user $i$ dropping out of the game. That is, the system $(\gamma,  \{\eta_i\}_{i=1}^N)$ has the property that for any strategy sequence $S = (s_1,\ldots, s_N)\in\mathcal{S}$,
\begin{equation}
\label{ineq:ass3}
\bar{\eta}_i(S)\geq\bar{\gamma}_{s_i}(S_{-i}),\quad \forall 1\leq i\leq N.
\end{equation}
\end{assumption}

\medskip
\begin{assumption}
\label{assumption4} The sum of the group utilities of the system is not larger than the social utility, i.e., for any strategy sequence $S=(s_1,\ldots, s_N)\in\mathcal{S}$,
\begin{equation}
\label{ineq:assm4}
\sum\limits_{i=1}^N\bar{\eta}_i(S)\leq \bar{\gamma}(S).
\end{equation}
\end{assumption}

Given $X\in\mathcal{X}$, if for any $1\leq i\leq N$, the inequalities $\eta_i(X)\geq\gamma_{x_i}(X_{-i})$  and $\sum_{i=1}^N\eta_i(X)\leq \gamma(X)$ hold, then the inequalities (\ref{ineq:ass3}) and (\ref{ineq:assm4}) hold.
\begin{remark}
Comparing Definitions~\ref{dfn:NE} and \ref{dfn:SNE}, we have that the only difference between a Nash equilibrium and a social-aware Nash equilibrium is that the former is defined in terms of $\bar{\alpha}_i$, and the latter is defined in terms of $\bar{\eta}_i$. So if we take $\bar{\eta}_i$  to play the role of $\bar{\alpha}_i$, then  satisfying Assumptions~\ref{assumption3} and \ref{assumption4} means that the utility system satisfies Assumptions~\ref{assumption1} and \ref{assumption2}. Based on the results of Theorems~\ref{thm:NE} and \ref{thm:NEC}, we have the following Theorems~\ref{thm:SNE} and \ref{thm:SNEC}.
\end{remark}

\begin{theorem}
\label{thm:SNE}
For a valid utility system $(\gamma, \{\eta_i\}_{i=1}^N)$, if the social utility function $\gamma$ is submodular, then for any social-aware Nash equilibrium $S\in\mathcal{S}$ we have 
\begin{equation}
\label{ineq:NE}
\bar{\gamma}(S)\geq\frac{1}{2}\left(\bar{\gamma}(\Omega)+\sum_{i=1}^N\bar{\gamma}_{s_i}({S_{-i}\cup \Omega})\right).
\end{equation}
\end{theorem}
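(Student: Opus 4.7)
The plan is to derive Theorem~\ref{thm:SNE} as an immediate corollary of Theorem~\ref{thm:NE} by a relabeling of the individual utility functions. The Remark preceding the theorem already signals this: a social-aware Nash equilibrium differs from a Nash equilibrium only in that $\bar{\eta}_i$ appears in place of $\bar{\alpha}_i$ in the unilateral-deviation condition, and Assumptions~\ref{assumption3} and \ref{assumption4} are the verbatim analogs of Assumptions~\ref{assumption1} and \ref{assumption2} with $\bar{\eta}_i$ replacing $\bar{\alpha}_i$. Since Theorem~\ref{thm:NE} depends on the individual utility functions only through validity, no new analytical work should be required.

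First, I would introduce an auxiliary utility system $(\gamma, \{\alpha_i'\}_{i=1}^N)$ by setting $\alpha_i' := \eta_i$ for each $1 \leq i \leq N$, so that $\bar{\alpha}_i' = \bar{\eta}_i$; this step is purely notational, to cast the hypotheses of the social-aware setup into the exact form required by Vetta's framework. Second, I would verify that this auxiliary system satisfies Assumptions~\ref{assumption1} and \ref{assumption2}: after substitution, Assumption~\ref{assumption3} is exactly Assumption~\ref{assumption1} for the auxiliary system, and Assumption~\ref{assumption4} similarly yields Assumption~\ref{assumption2}, so the auxiliary system is valid. Third, I would check that any social-aware Nash equilibrium $S$ in the sense of Definition~\ref{dfn:SNE} is a Nash equilibrium of the auxiliary system in the sense of Definition~\ref{dfn:NE}, since the defining inequality $\bar{\eta}_i(S) \geq \bar{\eta}_i((S_{-i}, s_i'))$ becomes $\bar{\alpha}_i'(S) \geq \bar{\alpha}_i'((S_{-i}, s_i'))$ after relabeling.

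Finally, with $\gamma$ submodular by hypothesis, I would invoke Theorem~\ref{thm:NE} for the auxiliary valid system and read off the stated inequality unchanged, since the right-hand side of \eqref{ineq:NE} involves only $\bar{\gamma}$ and not the individual utilities. The main obstacle here is conceptual rather than technical: one must confirm that Theorem~\ref{thm:NE} does not implicitly rely on any additional structural property of $\bar{\alpha}_i$ beyond validity, and a careful inspection of its statement confirms that it does not. In this sense the proof is a direct transfer rather than an independent argument, which is consistent with the authors' own framing in the Remark.
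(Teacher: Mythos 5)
Your proposal is correct and coincides with the paper's own argument: the authors prove Theorems~\ref{thm:SNE} and \ref{thm:SNEC} precisely by the relabeling you describe, observing in the preceding Remark that taking $\bar{\eta}_i$ to play the role of $\bar{\alpha}_i$ turns Assumptions~\ref{assumption3} and \ref{assumption4} into Assumptions~\ref{assumption1} and \ref{assumption2} and a social-aware Nash equilibrium into a Nash equilibrium, so Theorem~\ref{thm:NE} applies directly. Your additional check that the conclusion's right-hand side depends only on $\bar{\gamma}$ is a sensible explicit confirmation of what the paper leaves implicit.
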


\begin{theorem}
\label{thm:SNEC}
For a valid utility system $(\gamma, \{\eta_i\}_{i=1}^N)$, if  the social utility function $\gamma$ is nondecreasing and submodular, then for any Nash equilibrium $S\in\mathcal{S}$ we have 
\begin{equation}
\label{ineq:NEC}
\bar{\gamma}(S)\geq\frac{1}{1+c}\bar{\gamma}(\Omega).
\end{equation}
\end{theorem}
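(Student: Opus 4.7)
The plan is to reduce Theorem~\ref{thm:SNEC} to Theorem~\ref{thm:NEC} by a one-to-one syntactic substitution of $\bar{\eta}_i$ for $\bar{\alpha}_i$. The starting observation is that Definition~\ref{dfn:SNE} is obtained from Definition~\ref{dfn:NE} by replacing every occurrence of $\bar{\alpha}_i$ with $\bar{\eta}_i$, and likewise Assumption~\ref{assumption3} and Assumption~\ref{assumption4} are obtained from Assumption~\ref{assumption1} and Assumption~\ref{assumption2} under the same replacement. The Remark preceding Theorem~\ref{thm:SNE} already makes this correspondence explicit, and I intend to exploit it systematically rather than redo Vetta's argument from scratch.

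Concretely, given the hypotheses of Theorem~\ref{thm:SNEC}, I would form a derived Vetta-style utility system whose ``private utility'' for user $i$ is declared to be $\eta_i$ rather than the original $\alpha_i$, and then verify that this derived system meets the premises of Theorem~\ref{thm:NEC}. The verification proceeds in three steps: (i) validity in the sense of Vetta holds because Assumption~\ref{assumption3} is exactly (\ref{assum1}) after the substitution $\bar{\alpha}_i\mapsto\bar{\eta}_i$ and Assumption~\ref{assumption4} reduces to (\ref{assum2}) under the same substitution; (ii) the social utility function $\gamma$ is unchanged, so the hypotheses that it be nondecreasing and submodular carry over verbatim; and (iii) any social-aware Nash equilibrium $S$, by the defining inequality of Definition~\ref{dfn:SNE}, is a Nash equilibrium of the derived system. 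With these three items in place, Theorem~\ref{thm:NEC} applied to the derived system yields exactly the conclusion $\bar{\gamma}(S)\geq\bar{\gamma}(\Omega)/(1+c)$.

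The one subtlety worth flagging is that the curvature $c$ in Theorem~\ref{thm:SNEC} is the same $c$ that appears in Theorem~\ref{thm:NEC}, because curvature is defined purely in terms of $\bar{\gamma}$ and the optimal sequence $\Omega$ (see the definition in Section~2) and is therefore intrinsic to the social utility function; it is insensitive to whether user $i$'s objective is labelled $\bar{\alpha}_i$ or $\bar{\eta}_i$. As a consequence, the numerical constant $1/(1+c)$ transfers across without modification. There is no genuine obstacle in the argument---the entire content lies in recognizing and setting up the substitution, precisely as in the reduction used to derive Theorem~\ref{thm:SNE} from Theorem~\ref{thm:NE}.
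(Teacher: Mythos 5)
Your proposal is correct and follows essentially the same route as the paper: the paper's own justification (given in the Remark preceding Theorem~\ref{thm:SNE}) is precisely the substitution $\bar{\alpha}_i\mapsto\bar{\eta}_i$, under which Assumptions~\ref{assumption3} and~\ref{assumption4} become Assumptions~\ref{assumption1} and~\ref{assumption2} and a social-aware Nash equilibrium becomes a Nash equilibrium of the relabelled system, so Theorem~\ref{thm:NEC} applies directly. Your additional observation that the curvature $c$ depends only on $\bar{\gamma}$ and $\Omega$, and hence is unaffected by the relabelling, is a worthwhile point that the paper leaves implicit.
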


\subsection{Group Nash Equilibria}

In this section we consider a different type of social group utility maximization system in which the set of all users are divided into disjoint groups, and the users in the same group choose their strategies by maximizing their group utility function jointly.

Assume that the set of users $\mathcal{N}=\{1,\ldots, N\}$ is divided into $l$ disjoint groups, in which group $i$ ($1\leq i\leq l$) has users $\{m_i+1,\ldots,m_i+k_i\}$, where $m_i=\sum_{j=1}^{i-1}k_j$, $k_j$ is the number of users in group $j$, and $\sum_{j=1}^lk_j=N$.  Let $s^i=(s_{m_i+1},\ldots, s_{m_i+k_i})$, where $s_i\in\mathcal{S}_i$ is the strategy for user $i$. We call $s^i$ the \emph{group strategy} for group $i$. It includes the strategies taken by all the users in group $i$ ($1\leq i\leq l$). We use $S^{-i}$ to denote the sequence of group strategies taken by all groups except for group $i$. Given $S^{-i}$, we denote by $(S^{-i}, t^i)$ the group strategy sequence obtained when group $i$ changes its group strategy from $s^i$ to $t^i$. Similarly, for $X\in\mathcal{X}$, we use $x^i$ and $X^{-i}$ to denote the sequence of actions taken by the users in group $i$, and  the sequence of actions taken by all groups except for group $i$, respectively. For convenience, we still use $\eta_i$ and $\bar{\eta}_i$ to denote the group utility function and the expected group utility function for group $i$.

We define a \emph{group Nash equilibrium} as follows.

\begin{definition}
 A strategy set $S=(s_1,\ldots,s_N)$ is a group Nash equilibrium of a utility system if no group  can improve its group utility by unilaterally changing its group strategy, i.e., for any $1\leq i\leq l$,
\[
 \bar{\eta}_i(S)\geq \bar{\eta}_i((S^{-i}, t^{i})), \quad \forall t^i=(t_{m_i+1},\ldots, t_{m_i+k_i}),
\]
where $t_j\in\mathcal{S}_j$ for $m_i+1\leq j\leq m_i+k_i$. 
 
 \end{definition}

We say that the utility system $(\gamma, \{\eta_i\}_{i=1}^l)$ is \emph{valid} if it satisfies the following two assumptions. 
 
\begin{assumption}
\label{assumption5}
The group utility of group $i$ is at least as large as the loss in the social utility resulting from all the users in group $i$ dropping out of the game. That is, the system $(\gamma, \{\eta_i\}_{i=1}^l)$  has the property that for any strategy sequence $S=(s^1,\ldots, s^l)\in\mathcal{S}$,
\begin{equation}
\label{ineq:ass5}
\bar{\eta}_i(S)\geq\bar{\gamma}_{s^i}(S^{-i}),\quad \forall 1\leq i\leq l.
\end{equation}
\end{assumption}

\medskip
\begin{assumption}
\label{assumption6}
The sum of the group utilities of the system is not larger than  the social utility, i.e., for any strategy sequence $S=(s^1,\ldots, s^l)\in\mathcal{S}$,
\begin{equation}
\label{ineq:ass6}
\sum\limits_{i=1}^l\bar{\eta}_i(S)\leq \bar{\gamma}(S).
\end{equation}
\end{assumption}

\medskip

Given $X\in\mathcal{X}$, if for any $1\leq i\leq l$, the inequalities $\eta_i(X)\geq\gamma_{x^i}(X^{-i})$  and $\sum_{i=1}^l\eta_i(X)\leq \gamma(X)$ hold, then the inequalities (\ref{ineq:ass5}) and (\ref{ineq:ass6}) hold.
We now present our results on the performance of a group Nash equilibrium relative to the optimal social strategy $\Omega$. Although the overall flow of the proof for deriving  performance bound (without curvature) for the group Nash equilibria is similar to that of the proof from \cite{Vetta2002}, we still include it here because it will help us derive performance bounds involving curvature later on.

\begin{lemma}
\label{lemma1}
Assume that the social utility function $\gamma$ is a submodular set function. Then for any strategy set $S\in\mathcal{S}$, 
\begin{equation}
\label{ineq:lemma1}
\bar{\gamma}(\Omega)\leq \bar{\gamma}(S)+\sum\limits_{i:\sigma^i\subseteq \Omega\setminus S}\bar{\gamma}_{\sigma^i}(S^{-i})-\sum\limits_{i:s^i\subseteq S\setminus \Omega}\bar{\gamma}_{s^i}({S^{(i-1)}\cup \Omega}), 
\end{equation}
where $S^{(i)}=s^1\oplus s^2\oplus\cdots\oplus s^{i}$ is the sequence of the group strategies taken by the first $i$ groups.

\end{lemma}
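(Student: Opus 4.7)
The plan is to adapt Vetta's telescoping argument to the group setting. The starting point is the algebraic identity
\begin{equation*}
\bar{\gamma}(\Omega) = \bar{\gamma}(S) + \bigl[\bar{\gamma}(S \cup \Omega) - \bar{\gamma}(S)\bigr] - \bigl[\bar{\gamma}(S \cup \Omega) - \bar{\gamma}(\Omega)\bigr],
\end{equation*}
which pivots $\bar{\gamma}(\Omega)$ through the joint strategy $S\cup\Omega$, in which every group layers its optimal action on top of its equilibrium strategy. I would then bound each bracketed difference by telescoping over the $l$ groups.

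For the first bracket, I walk from $S$ to $S\cup\Omega$ by switching groups one at a time in the order $i=1,\ldots,l$: after step $i$, groups $1,\ldots,i$ play the union $s^j\cup\sigma^j$ while groups $i+1,\ldots,l$ still play $s^j$. The $i$-th increment is the marginal gain of layering $\sigma^i$ on top of a sequence that contains $S^{-i}$ as a subsequence, so submodularity of $\gamma$ (which transfers to $\bar{\gamma}$ by taking expectations) bounds this increment above by $\bar{\gamma}_{\sigma^i}(S^{-i})$. Summing gives
\begin{equation*}
\bar{\gamma}(S\cup\Omega) - \bar{\gamma}(S) \le \sum_{i=1}^{l}\bar{\gamma}_{\sigma^i}(S^{-i}).
\end{equation*}

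For the second bracket, I walk from $\Omega$ to $S\cup\Omega$ by layering the equilibrium strategies $s^i$ on top of $\Omega$ in the same order $i=1,\ldots,l$. This telescoping is an exact identity: by the definition of $\bar{\gamma}_{s^i}$, the $i$-th increment is precisely $\bar{\gamma}_{s^i}(S^{(i-1)}\cup\Omega)$, so
\begin{equation*}
\bar{\gamma}(S\cup\Omega) - \bar{\gamma}(\Omega) = \sum_{i=1}^{l}\bar{\gamma}_{s^i}(S^{(i-1)}\cup\Omega).
\end{equation*}
Substituting both estimates back into the pivot identity yields the claimed inequality~(\ref{ineq:lemma1}); the index restrictions $\sigma^i\subseteq\Omega\setminus S$ and $s^i\subseteq S\setminus\Omega$ are harmless, because any group whose optimal action already coincides with its equilibrium action (respectively, vice versa) contributes a zero increment to the corresponding telescoping sum and can therefore be dropped from the sum.

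The main obstacle will be reconciling the paper's sequence-with-union notation with the set-theoretic submodularity it invokes. In particular, verifying that the intermediate sequence produced at step $i-1$ of the first telescoping really does ``contain'' $S^{-i}$ in the sense required for the submodularity inequality requires unpacking the $\cup$ operation on mixed strategies carefully and then using linearity of expectation to lift a pure-action inequality to $\bar{\gamma}$. Once these notational subtleties are pinned down, the argument is the natural group-level generalization of Vetta's proof and requires no new analytic ideas.
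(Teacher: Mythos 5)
Your proposal is correct and follows essentially the same route as the paper: the paper also pivots through $\bar{\gamma}(\Omega\cup S)$, bounding $\bar{\gamma}(\Omega\cup S)-\bar{\gamma}(S)$ above by $\sum_{i:\sigma^i\subseteq\Omega\setminus S}\bar{\gamma}_{\sigma^i}(S^{-i})$ via submodular telescoping and expressing $\bar{\gamma}(\Omega\cup S)-\bar{\gamma}(\Omega)$ exactly as $\sum_{i:s^i\subseteq S\setminus\Omega}\bar{\gamma}_{s^i}(S^{(i-1)}\cup\Omega)$. The only difference is that the paper delegates these two telescoping facts to Propositions~1 and~2 of the cited Liu--Chong--Pezeshki reference rather than deriving them in place as you sketch.
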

\vspace{1mm}
\begin{proof}Write $\Omega=\sigma^1\oplus\cdots\oplus \sigma^l$ and $S=s^1\oplus\cdots\oplus s^l$, where $\sigma^i=(\sigma_{m_i+1},\ldots, \sigma_{m_i+k_i})$, $s^i=(s_{m_i+1},\ldots, s_{m_i+k_i})$, and $\sigma_j, s_j\in\mathcal{S}_j$ for $m_i+1\leq j\leq m_i+k_i$.

By Propositions~1 and 2 in \cite{Liu2016}, we have that 
\begin{align*}
\bar{\gamma}(\Omega\cup S)&\leq \bar{\gamma}(S)+\sum\limits_{i:\sigma^i\subseteq \Omega\setminus S}\bar{\gamma}_{\sigma^i}(S)\\
&\leq \bar{\gamma}(S)+\sum\limits_{i:\sigma^i\subseteq \Omega\setminus S}\bar{\gamma}_{\sigma^i}(S^{-i})
\end{align*}
and
\[\bar{\gamma}(\Omega\cup S)=\bar{\gamma}(\Omega)
+\sum\limits_{i:s^i\subseteq S\setminus \Omega}\bar{\gamma}_{s^i}({S^{(i-1)}\cup \Omega}).\]
Combining the two inequalities above, we have (\ref{ineq:lemma1}).
\end{proof}
\begin{theorem}
\label{thm:GNE}
For a valid utility system $(\gamma, \{\eta_i\}_{i=1}^N)$, if the social utility function $\gamma$ is submodular, then  any group Nash equilibrium $S=(s_1,\ldots,s_N)\in\mathcal{S}$ satisfies
\begin{equation}
\label{ineq:GNS}
\bar{\gamma}(S)\geq \frac{1}{2}\left(\bar{\gamma}(\Omega)+\sum\limits_{i=1}^l\bar{\gamma}_{s^i}(\Omega\cup S^{-i})\right).
\end{equation}
\end{theorem}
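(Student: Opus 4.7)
The plan is to adapt Vetta's proof of Theorem~\ref{thm:NE} to the group setting, treating each of the $l$ groups as a single composite user and letting Assumptions~\ref{assumption5}--\ref{assumption6} play the role of Assumptions~\ref{assumption1}--\ref{assumption2}. The four ingredients are Lemma~\ref{lemma1}, the group Nash inequality, the two validity assumptions, and the submodularity of $\gamma$.

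First I would apply Lemma~\ref{lemma1} to obtain
\[
\bar{\gamma}(\Omega)\leq\bar{\gamma}(S)+\sum_{i:\sigma^i\subseteq\Omega\setminus S}\bar{\gamma}_{\sigma^i}(S^{-i})-\sum_{i:s^i\subseteq S\setminus\Omega}\bar{\gamma}_{s^i}\bigl(S^{(i-1)}\cup\Omega\bigr),
\]
so the remaining task is to control the two right-hand sums.

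For the ``add'' sum, I would invoke the group Nash property with a deviation $t^i$ tailored to group $i$ (the natural choices are the pure deviation $t^i=\sigma^i$, or the mixed deviation in which each user $j$ in group $i$ plays $\sigma_j\cup x_j^k$ with probability $s_j^k$). The group Nash inequality gives $\bar{\eta}_i(S)\geq\bar{\eta}_i((S^{-i},t^i))$, and Assumption~\ref{assumption5} evaluated at $(S^{-i},t^i)$ yields $\bar{\eta}_i((S^{-i},t^i))\geq\bar{\gamma}_{t^i}(S^{-i})$, so for each group $i$ we obtain $\bar{\eta}_i(S)\geq\bar{\gamma}_{\sigma^i}(S^{-i})$ after matching the deviation to the marginal appearing in Lemma~\ref{lemma1}. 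Summing over $i=1,\ldots,l$ and applying Assumption~\ref{assumption6} ($\sum_i\bar{\eta}_i(S)\leq\bar{\gamma}(S)$) gives the key inequality $\sum_{i:\sigma^i\subseteq\Omega\setminus S}\bar{\gamma}_{\sigma^i}(S^{-i})\leq\bar{\gamma}(S)$.

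For the ``remove'' sum, I would use submodularity directly. Because $S^{(i-1)}\subseteq S^{-i}$ as sequences, the bases satisfy $S^{(i-1)}\cup\Omega\subseteq\Omega\cup S^{-i}$, and so
\[
\bar{\gamma}_{s^i}\bigl(S^{(i-1)}\cup\Omega\bigr)\geq\bar{\gamma}_{s^i}\bigl(\Omega\cup S^{-i}\bigr),\qquad i=1,\ldots,l.
\]
Substituting both bounds into the consequence of Lemma~\ref{lemma1} produces $\bar{\gamma}(\Omega)\leq 2\bar{\gamma}(S)-\sum_{i=1}^l\bar{\gamma}_{s^i}(\Omega\cup S^{-i})$, which rearranges to (\ref{ineq:GNS}).

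The step I expect to be most delicate is the promotion of the two conditional index sets $\{i:\sigma^i\subseteq\Omega\setminus S\}$ and $\{i:s^i\subseteq S\setminus\Omega\}$ coming out of Lemma~\ref{lemma1} to full sums over $i=1,\ldots,l$: in the group setting each ``user'' is now a vector of individual actions, so the union $\sigma^i\cup s^i$ must be interpreted componentwise, and one must verify both that the resulting group deviation $t^i$ is admissible in $\prod_{j\in\text{group }i}\mathcal{S}_j$ and that the submodular base comparisons line up termwise in the correct direction without requiring monotonicity of $\gamma$. This is the single place where Vetta's original argument is fussy to port from scalar- to vector-valued actions; once this bookkeeping is pinned down, the rest of the proof is an algebraic rearrangement of Vetta's template.
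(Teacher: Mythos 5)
Your overall route is the paper's: Lemma~\ref{lemma1}, then the group Nash property combined with Assumption~\ref{assumption5} at the deviated profile $(S^{-i},\sigma^i)$ to control the ``add'' sum, Assumption~\ref{assumption6} to convert $\sum_i\bar{\eta}_i(S)$ into $\bar{\gamma}(S)$, and submodularity to push the base of the ``remove'' terms up to $\Omega\cup S^{-i}$. The chain $\bar{\gamma}_{\sigma^i}(S^{-i})\leq\bar{\eta}_i((S^{-i},\sigma^i))\leq\bar{\eta}_i(S)$ is the right reading of the paper's terse ``by the definition of a group Nash equilibrium'' step.

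However, there is a genuine gap in how you treat the groups with $s^i=\sigma^i$ (those indexed by $\{i:s^i\subseteq S\cap\Omega\}$), and it breaks the final inequality. First, your ``key inequality'' $\sum_{i:\sigma^i\subseteq\Omega\setminus S}\bar{\gamma}_{\sigma^i}(S^{-i})\leq\bar{\gamma}(S)$ requires discarding from $\sum_{i=1}^l\bar{\eta}_i(S)\leq\bar{\gamma}(S)$ the terms belonging to the coinciding groups; since the theorem does not assume $\gamma$ is nondecreasing, those terms have no guaranteed sign and cannot simply be dropped. Second, and more importantly, even granting that inequality, substitution into Lemma~\ref{lemma1} yields only
\[
\bar{\gamma}(\Omega)\leq 2\bar{\gamma}(S)-\sum_{i:s^i\subseteq S\setminus\Omega}\bar{\gamma}_{s^i}\bigl(\Omega\cup S^{-i}\bigr),
\]
a sum over the non-coinciding groups only. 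Promoting this to the full sum $\sum_{i=1}^l$ in (\ref{ineq:GNS}) would require $\bar{\gamma}_{s^i}(\Omega\cup S^{-i})\leq 0$ for the coinciding groups, which is false in general (and the opposite of what holds when $\gamma$ is nondecreasing). The paper avoids both problems by \emph{not} discarding those terms: it writes $\sum_{i:s^i\subseteq S\setminus\Omega}\bar{\eta}_i(S)\leq\bar{\gamma}(S)-\sum_{i:s^i\subseteq S\cap\Omega}\bar{\eta}_i(S)\leq\bar{\gamma}(S)-\sum_{i:s^i\subseteq S\cap\Omega}\bar{\gamma}_{s^i}(S^{-i})$, and then applies submodularity once more via $\bar{\gamma}_{s^i}(S^{-i})\geq\bar{\gamma}_{s^i}(\Omega\cup S^{-i})$, so that the retained terms supply exactly the missing summands and the two index sets merge into $\sum_{i=1}^l\bar{\gamma}_{s^i}(\Omega\cup S^{-i})$. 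Your argument needs this bookkeeping restored; once it is, the rest of your outline matches the paper's proof.
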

\begin{proof}
By Lemma~\ref{lemma1}, we have 
\[\bar{\gamma}(\Omega)\leq \bar{\gamma}(S)+\sum\limits_{i:\sigma^i\subseteq \Omega\setminus S}\bar{\gamma}_{\sigma^i}(S^{-i})-\sum\limits_{i:s^i\subseteq S\setminus \Omega}\bar{\gamma}_{s^i}({S^{(i-1)}\cup \Omega}).\]
By the definition of a group Nash equilibrium, we have 
\[\sum\limits_{i:\sigma^i\subseteq \Omega\setminus S}\bar{\gamma}_{\sigma^i}(S^{-i})\leq \sum\limits_{i:\sigma^i\subseteq \Omega\setminus S}\bar{\gamma}_{s^i}(S^{-i})\leq \sum\limits_{i:s^i\subseteq S\setminus \Omega}\bar{\gamma}_{s^i}(S^{-i}).\]
By Assumptions~\ref{assumption5} and \ref{assumption6}, we have 
\begin{align*}
\sum\limits_{i:s^i\subseteq S\setminus \Omega}\bar{\gamma}_{s^i}(S^{-i})&\leq \sum\limits_{i:s^i\subseteq S\setminus \Omega}\bar{\eta}_i(S)\\
&\leq \bar{\gamma}(S)-\sum\limits_{i:s^i\subseteq S\cap \Omega}\bar{\eta}_i(S)\\
&\leq  \bar{\gamma}(S)-\sum\limits_{i:s^i\subseteq S\cap \Omega}\bar{\gamma}_{s^i}(S^{-i}).
\end{align*}
Combining the inequalities above and using submodularity results in
\begin{align*}
\bar{\gamma}(\Omega)
&\leq 2\bar{\gamma}(S)-\sum\limits_{i:s^i\subseteq S\cap \Omega}\bar{\gamma}_{s^i}(S^{-i})-\sum\limits_{i:s^i\subseteq S\setminus \Omega}\bar{\gamma}_{s^i}(\Omega\cup S^{(i-1)})\\
&\leq 2\bar{\gamma}(S)-\sum\limits_{i:s^i\subseteq S\cap \Omega}\bar{\gamma}_{s^i}(\Omega\cup S^{-i})-\sum\limits_{i:s^i\subseteq S\setminus \Omega}\bar{\gamma}_{s^i}(\Omega\cup S^{-i})\\
&\leq  2\bar{\gamma}(S)-\sum\limits_{i=1}^l\bar{\gamma}_{s^i}(\Omega\cup S^{-i}),
\end{align*}
which implies that the inequality (\ref{ineq:GNS}) holds.
\end{proof}

\begin{remark}
If the utility function $\gamma$ is nondecreasing, then the term $\sum_{i=1}^l\bar{\gamma}_{s^i}(\Omega\cup S^{-i})$ is  non-negative, so $\bar{\gamma}(S)\geq \frac{1}{2}\bar{\gamma}(\Omega)$, which means that the social value of any group Nash equilibrium is at least half of the optimal social utility value.
\end{remark}

To better characterize the relation of the social utility value of any group Nash equilibrium and that of the optimal solution $\Omega$, we define the {group curvature} $c_{k_i}$ of the social utility function for group $i$ as 
\[c_{k_i}=\max\limits_{S\in\mathcal{S}, \bar{\gamma}_{s^i}(\emptyset)\neq 0}\left\{1-\frac{\bar{\gamma}_{s^i}(\Omega\cup S^{-i})}{\bar{\gamma}_{s^i}(\emptyset)}\right\}{.}\]

\begin{lemma}
\label{curvaturecom}
Assume tha the utility function $\gamma$ is submodular and nondecreasing. Then we have $c_{k_i}\leq c$ for $1\leq i\leq l$.  Especially, if $\mathcal{X}_1=\mathcal{X}_2=\cdots=\mathcal{X}_N$, then we have $c_{k_i}\leq c_{k_j}$ for $k_i\geq k_j$.
\end{lemma}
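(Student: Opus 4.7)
The plan is to reduce both inequalities to statements about single-user marginals by telescoping the group-level marginal $\bar{\gamma}_{s^i}(\cdot)$ over the users in group $i$, then applying submodularity together with the definition of the single-user curvature $c$ from Section~2.

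For the first inequality $c_{k_i}\le c$, I would fix any $S\in\mathcal{S}$ with $\bar{\gamma}_{s^i}(\emptyset)\ne 0$, write $s^i=(s_{m_i+1},\ldots,s_{m_i+k_i})$, and set $T_k=(s_{m_i+1},\ldots,s_{m_i+k-1})$ with $T_1=\emptyset$. Telescoping gives
\[
\bar{\gamma}_{s^i}(\emptyset)=\sum_{k=1}^{k_i}\bar{\gamma}_{s_{m_i+k}}(T_k),\qquad \bar{\gamma}_{s^i}(\Omega\cup S^{-i})=\sum_{k=1}^{k_i}\bar{\gamma}_{s_{m_i+k}}(\Omega\cup S^{-i}\oplus T_k).
\]
Submodularity immediately yields $\bar{\gamma}_{s_{m_i+k}}(T_k)\le\bar{\gamma}_{s_{m_i+k}}(\emptyset)$. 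For each $k$ I would further construct a profile $\tilde S^{(k)}$ (obtained from $S$ by leaving every other user's strategy unchanged) whose augmented sequence $\Omega\cup\tilde S^{(k)}_{-(m_i+k)}$ is a super-baseline of $\Omega\cup S^{-i}\oplus T_k$; submodularity and the definition of $c$ then give $\bar{\gamma}_{s_{m_i+k}}(\Omega\cup S^{-i}\oplus T_k)\ge\bar{\gamma}_{s_{m_i+k}}(\Omega\cup\tilde S^{(k)}_{-(m_i+k)})\ge(1-c)\bar{\gamma}_{s_{m_i+k}}(\emptyset)$. Summing in $k$ and dividing delivers $\bar{\gamma}_{s^i}(\Omega\cup S^{-i})/\bar{\gamma}_{s^i}(\emptyset)\ge 1-c$; maximising over $S$ yields $c_{k_i}\le c$.

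For the second inequality $c_{k_i}\le c_{k_j}$ under the equal-action-space hypothesis $\mathcal{X}_1=\cdots=\mathcal{X}_N$, my plan is to transplant group $i$'s strategies onto group $j$'s slots. Given $S$ nearly attaining $c_{k_i}$, introduce the per-user pairs $a_k=\bar{\gamma}_{s_{m_i+k}}(T_k)$ and $b_k=\bar{\gamma}_{s_{m_i+k}}(\Omega\cup S^{-i}\oplus T_k)$ from the previous step, so that the witness ratio equals $\sum_{k=1}^{k_i}b_k/\sum_{k=1}^{k_i}a_k$. Let $K^\star$ be the set of $k_j$ indices with the smallest ratios $b_k/a_k$; the mediant (weighted-average) inequality gives $\sum_{k\in K^\star}b_k/\sum_{k\in K^\star}a_k\le\sum_{k=1}^{k_i}b_k/\sum_{k=1}^{k_i}a_k$. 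Because all action spaces coincide, the strategies $(s_{m_i+k})_{k\in K^\star}$ are feasible for the $k_j$ users of group $j$; I would place them there to form a new profile $S'$, with the remaining positions chosen so that the environment $\Omega\cup S'^{-j}$ dominates each telescoped baseline $\Omega\cup S^{-i}\oplus T_k$ for $k\in K^\star$ in the submodular order. Submodularity then forces $\bar{\gamma}_{s'^j}(\Omega\cup S'^{-j})/\bar{\gamma}_{s'^j}(\emptyset)\le\sum_{k\in K^\star}b_k/\sum_{k\in K^\star}a_k$, and passing to the supremum over $S$ yields $c_{k_j}\ge c_{k_i}$.

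The main obstacle is the construction of $S'$ in Part~2: the equal-action-space hypothesis makes the transplant syntactically legitimate, but ensuring that $\Omega\cup S'^{-j}$ dominates every $\Omega\cup S^{-i}\oplus T_k$ for $k\in K^\star$ is delicate, since group $j$'s slots, which contribute $\sigma_\ell\cup s_\ell$ in the baselines $\Omega\cup S^{-i}\oplus T_k$, are absent from $\Omega\cup S'^{-j}$. Closing this gap requires redistributing the unused $k_i-k_j$ strategies of group $i$ together with the original strategies of group $j$ onto the coordinates where domination fails, again using the common action space, and then appealing to submodularity a final time to absorb any residual mismatch in the direction favourable to the desired inequality.
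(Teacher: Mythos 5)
The paper does not actually supply a proof of this lemma---it defers both halves to Theorems~3.3 and 3.4 of \cite{Liu2017} with the remark that the arguments are ``similar''---so there is no line-by-line comparison to make. Your first part ($c_{k_i}\leq c$) is correct and is, in substance, the intended adaptation of the cited argument: telescope $\bar{\gamma}_{s^i}(\emptyset)$ and $\bar{\gamma}_{s^i}(\Omega\cup S^{-i})$ over the users of group $i$, bound each term of the first sum above by $\bar{\gamma}_{s_{m_i+k}}(\emptyset)$, and bound each term of the second sum below by $(1-c)\bar{\gamma}_{s_{m_i+k}}(\emptyset)$ via the containment $\Omega\cup S^{-i}\oplus T_k\subseteq \Omega\cup S_{-(m_i+k)}$ together with the definition of $c$; your ``super-baseline'' profile $\tilde{S}^{(k)}$ is simply $S$ itself, so no construction is really needed. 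The only loose end is the degenerate indices with $\bar{\gamma}_{s_{m_i+k}}(\emptyset)=0$, which are excluded from the maximum defining $c$; monotonicity and submodularity force every marginal of such an $s_{m_i+k}$ to vanish, so those terms drop out of both sums. That is a one-line patch.

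The second part contains a genuine gap, which you have in fact located yourself. The mediant-inequality selection of the $k_j$ indices with smallest ratios $b_k/a_k$ is sound (the sub-average of the smallest-ratio block is dominated by the full weighted average), but the transplant onto group $j$'s slots cannot be completed as described, for two reasons. First, the hypothesis $\mathcal{X}_1=\cdots=\mathcal{X}_N$ only makes the transplant \emph{syntactically} legal; it does not make $\gamma$ exchangeable across user coordinates, so the quantities $a_k,b_k$, which are marginals of acts executed \emph{by users of group $i$}, carry no information about the marginals of the same acts executed by users of group $j$---and it is the latter that enter $c_{k_j}$. Second, even granting exchangeability, the domination $\Omega\cup S'^{-j}\oplus T_k'\supseteq \Omega\cup S^{-i}\oplus T_k$ that your numerator bound requires fails precisely at group $j$'s coordinates: the not-yet-placed users of group $j$ contribute only $\sigma_\ell$ to the left-hand sequence (this is forced by the definition of the $\Omega\cup$ operation), whereas the right-hand baseline contains $\sigma_\ell\cup s_\ell$ there, and submodularity then points in the unfavourable direction. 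The proposed ``redistribution'' cannot repair this, because the coordinates at which domination fails are exactly the ones that are pinned to $\sigma_\ell$ and cannot receive extra acts. As written, the second inequality is not established; closing it would require either an explicit exchangeability (symmetry) assumption on $\gamma$ or an argument that compares $c_{k_i}$ to $c_{k_j}$ without ever leaving group $i$'s coordinates.
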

The proof of $c_{k_i}\leq c$ is similar to that of Theorem~3.3 from \cite{Liu2017} and the proof of  $c_{k_i}\leq c_{k_j}$ for $k_i\geq k_j$ is similar to  that of  Theorem~3.4 from \cite{Liu2017}, so we skip it here. 

\begin{lemma}
\label{lemma2}
Assume that $\gamma$ is a submodular set function. Then for any strategy set $S=(s_1,\ldots,s_N)\in\mathcal{S}$, we have 
\[\bar{\gamma}(S)\leq \sum\limits_{i=1}^l\bar{\gamma}_{s^i}(\emptyset)\]
where $s^i=(s_{m_i+1},\ldots, s_{m_i+k_i})$ for $1\leq i\leq l$.
\end{lemma}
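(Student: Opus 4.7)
The plan is to prove the lemma by a standard telescoping argument combined with submodularity, adapted to the sequence setting of Section~2. The strategy parallels the way marginal-gain bounds are derived for classical submodular set functions, but uses the concatenation operation $\oplus$ and the marginal notation $\bar{\gamma}_W(T) = \bar{\gamma}(T\oplus W)-\bar{\gamma}(T)$ from Section~2.3.

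First, I would telescope $\bar{\gamma}(S)$ along the partial group sequences $S^{(i)} = s^1 \oplus \cdots \oplus s^i$, setting $S^{(0)} = \emptyset$ and $S^{(l)} = S$. Writing each consecutive difference as a marginal gain yields
\[
\bar{\gamma}(S) \;=\; \bar{\gamma}(\emptyset) \;+\; \sum_{i=1}^{l}\bigl(\bar{\gamma}(S^{(i)})-\bar{\gamma}(S^{(i-1)})\bigr) \;=\; \bar{\gamma}(\emptyset) \;+\; \sum_{i=1}^{l}\bar{\gamma}_{s^i}(S^{(i-1)}).
\]
Since the $l$ groups are disjoint, each concatenation $S^{(i-1)} \oplus s^i$ is well-defined in the sense of Section~2, so each $\bar{\gamma}_{s^i}(S^{(i-1)})$ is a legitimate marginal gain.

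Next I would invoke submodularity termwise. The empty sequence is trivially a subsequence of $S^{(i-1)}$, so submodularity of $\bar{\gamma}$ gives
\[
\bar{\gamma}_{s^i}(S^{(i-1)}) \;\leq\; \bar{\gamma}_{s^i}(\emptyset), \qquad i=1,\ldots,l.
\]
Summing these inequalities and using the standard normalization $\bar{\gamma}(\emptyset)=0$ (implicit throughout Vetta's utility-system framework and consistent with the curvature definition used earlier in the paper, which requires $\bar{\gamma}_{s_i}(\emptyset)$ to play a canonical role) yields $\bar{\gamma}(S) \leq \sum_{i=1}^{l}\bar{\gamma}_{s^i}(\emptyset)$, which is the claimed bound.

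The argument involves no curvature information, no Nash-equilibrium property, and no analytic difficulty; the only real issue to verify is the bookkeeping associated with the sequence-submodularity definition, namely that $\emptyset \subseteq S^{(i-1)}$ in the subsequence sense of \cite{Smith2015} and that $S^{(i-1)} \oplus s^i$ is defined (which follows from disjointness of the groups). If the paper chose not to adopt $\bar{\gamma}(\emptyset)=0$ as a convention, the exact same proof would instead yield $\bar{\gamma}(S) - \bar{\gamma}(\emptyset) \leq \sum_{i=1}^{l}\bar{\gamma}_{s^i}(\emptyset)$; the structure of the proof is unchanged.
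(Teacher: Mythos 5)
Your proposal is correct and follows essentially the same route as the paper: telescope $\bar{\gamma}(S)$ along the partial group sequences $S^{(0)},\ldots,S^{(l)}$ and bound each marginal $\bar{\gamma}_{s^i}(S^{(i-1)})$ by $\bar{\gamma}_{s^i}(\emptyset)$ via submodularity. The paper's proof is exactly this telescoping-plus-submodularity argument (with $\bar{\gamma}(\emptyset)=0$ taken implicitly, as you correctly anticipate).
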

\begin{proof}
By the submodularity of $\bar{\gamma}$, we have
\begin{align*}
\bar{\gamma}(S)&=\bar{\gamma}_{s^1}(\emptyset)+\bar{\gamma}_{s^2}(s^1)+\cdots+\bar{\gamma}_{s^i}(s^1\oplus\cdots\oplus s^{i-1})\\
&\quad\quad+\cdots+\bar{\gamma}_{s^l}(s^1\oplus \cdots\oplus s^{l-1})\\
&\leq \bar{\gamma}_{s^1}(\emptyset)+\bar{\gamma}_{s^2}(\emptyset)+\cdots+\bar{\gamma}_{s^i}(\emptyset)+\cdots+\bar{\gamma}_{s^l}(\emptyset)\\
&=\sum\limits_{i=1}^l\bar{\gamma}_{s^i}(\emptyset).
\end{align*}
\end{proof}
\begin{theorem}
\label{thm:GNEC}
For a valid utility system $(\gamma, \{\eta_i\}_{i=1}^l)$, if the social utility function $\gamma$ is nondecreasing and submodular, then  any group Nash equilibrium $S=(s_1,\ldots,s_N)\in\mathcal{S}$ satisfies
\[\bar{\gamma}(S)\geq  \frac{1}{1+\max\limits_{1\leq i\leq l}c_{k_i}}\bar{\gamma}(\Omega).\]
Especially, if $\mathcal{X}_1=\mathcal{X}_2=\cdots=\mathcal{X}_N$, we have 
\[\bar{\gamma}(S)\geq  \frac{1}{1+c_{k^*}}\bar{\gamma}(\Omega),\]
where $k^*=\min_{1\leq i\leq l}k_i$.
\end{theorem}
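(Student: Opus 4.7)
The plan is to start from the curvature-free bound of Theorem~\ref{thm:GNE}, use the definition of group curvature to replace the sum $\sum_{i=1}^l \bar{\gamma}_{s^i}(\Omega\cup S^{-i})$ by a quantity bounded below in terms of $\bar{\gamma}(S)$, and close the argument by invoking Lemma~\ref{lemma2}. For brevity, write $c^* = \max_{1\leq i\leq l} c_{k_i}$.

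From Theorem~\ref{thm:GNE}, rearranging (\ref{ineq:GNS}) gives $\bar{\gamma}(\Omega) \leq 2\bar{\gamma}(S) - \sum_{i=1}^l \bar{\gamma}_{s^i}(\Omega\cup S^{-i})$. The definition of $c_{k_i}$ yields the pointwise inequality
\[
\bar{\gamma}_{s^i}(\Omega\cup S^{-i}) \;\geq\; (1-c_{k_i})\bar{\gamma}_{s^i}(\emptyset)
\]
(which also holds trivially when $\bar{\gamma}_{s^i}(\emptyset)=0$). Since $\gamma$ is nondecreasing and submodular, Lemma~\ref{curvaturecom} together with the standard estimate $c\in[0,1]$ gives $c^* \leq c \leq 1$, so $1-c^* \geq 0$; nondecreasingness further implies each $\bar{\gamma}_{s^i}(\emptyset) \geq 0$. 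Hence
\[
\sum_{i=1}^l \bar{\gamma}_{s^i}(\Omega\cup S^{-i}) \;\geq\; (1-c^*) \sum_{i=1}^l \bar{\gamma}_{s^i}(\emptyset) \;\geq\; (1-c^*)\,\bar{\gamma}(S),
\]
where the second step applies Lemma~\ref{lemma2}. Substituting back produces $\bar{\gamma}(\Omega) \leq (1+c^*)\bar{\gamma}(S)$, which is the first claim.

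For the equal-action-space case $\mathcal{X}_1=\mathcal{X}_2=\cdots=\mathcal{X}_N$, I would combine the first claim with the second statement of Lemma~\ref{curvaturecom}: when the action spaces coincide, $c_{k_i}$ is nonincreasing in $k_i$, so the maximum over groups is attained at the smallest group size, giving $c^* = c_{k^*}$ with $k^* = \min_{1\leq i\leq l} k_i$, and the second inequality of the theorem follows immediately.

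The main obstacle is careful bookkeeping of inequality directions: the curvature bound $\bar{\gamma}_{s^i}(\Omega\cup S^{-i}) \geq (1-c_{k_i})\bar{\gamma}_{s^i}(\emptyset)$ only propagates usefully once one verifies both $\bar{\gamma}_{s^i}(\emptyset) \geq 0$ and $1-c^* \geq 0$; the former comes from $\gamma$ being nondecreasing, and the latter from Lemma~\ref{curvaturecom} combined with $c\leq 1$. Moreover, the invocation of Lemma~\ref{lemma2} is legitimate precisely because the coefficient $(1-c^*)$ is nonnegative, since a negative coefficient would reverse the inequality and break the argument. Once these sign conditions are secured, the chain of inequalities telescopes to the claim.
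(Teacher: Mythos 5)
Your proposal is correct and follows essentially the same route as the paper: invoke Theorem~\ref{thm:GNE}, lower-bound each $\bar{\gamma}_{s^i}(\Omega\cup S^{-i})$ by $(1-c_{k_i})\bar{\gamma}_{s^i}(\emptyset)$ via the group-curvature definition, apply Lemma~\ref{lemma2} to replace $\sum_i\bar{\gamma}_{s^i}(\emptyset)$ by $\bar{\gamma}(S)$, and use Lemma~\ref{curvaturecom} for the equal-action-space case. Your explicit verification of the sign conditions ($1-c^*\geq 0$ and $\bar{\gamma}_{s^i}(\emptyset)\geq 0$) is a point the paper leaves implicit, but the argument is the same.
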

\begin{proof}
For any group Nash equilibrium $S\in\mathcal{S}$, write $S=s^1\oplus\cdots\oplus s^l$, where $s^i=(s_{m_i+1},\ldots, s_{m_i+k_i})$ for $1\leq i\leq l$.

By the definition of the curvature $c_{k_i}$ for group $i$, we have 
\[\bar{\gamma}_{s^i}(\Omega\cup S^{-i})\geq\left(1-c_{k_i}\right)\bar{\gamma}_{s^i}(\emptyset).\]
Using the inequality above,  Lemma~\ref{lemma2}, and Theorem~\ref{thm:GNE}, we have 
\begin{align*}
\bar{\gamma}(S)&\geq \frac{1}{2}\left(\bar{\gamma}(\Omega)+\sum\limits_{i=1}^l\bar{\gamma}_{s^i}(\Omega\cup S^{-i})\right)\\
&\geq  \frac{1}{2}\left(\bar{\gamma}(\Omega)+\sum\limits_{i=1}^l\left(1-c_{k_i}\right)\bar{\gamma}_{s^i}(\emptyset)\right)\\
&\geq \frac{1}{2}\left(\bar{\gamma}(\Omega)+(1-\max_{1\leq i\leq l}c_{k_i})\sum\limits_{i=1}^l\bar{\gamma}_{s^i}(\emptyset)\right)\\
&\geq \frac{1}{2}(\bar{\gamma}(\Omega)+(1-\max_{1\leq i\leq l}c_{k_i})),
\end{align*}
which implies that 
\[\bar{\gamma}(S)\geq \frac{1}{1+\max\limits_{1\leq i\leq l}c_{k_i}}\bar{\gamma}(\Omega).\]
 When $\mathcal{X}_1=\mathcal{X}_2=\cdots=\mathcal{X}_N$, by Lemma~\ref{curvaturecom}, we have that $c_{k_i}\leq c_{k_j}$ for $k_i\geq k_j$. Therefore, we have
\[\bar{\gamma}(S)\geq \frac{1}{1+c_{k^*}}\bar{\gamma}(\Omega),\]
where  $k^*=\min_{1\leq i\leq l}k_i$.
\end{proof}
\begin{remark}
 When the group utility function $\gamma$ is non-decreasing and submodular, it is easy to check that $c_{k_i}\in[0,1]$, which implies that $1/(1+\max_{1\leq i\leq l}c_{k_i})\geq 1/2$.
\end{remark}
\begin{remark}
  When the group utility function $\gamma$ is non-decreasing and submodular, we have $\bar{\gamma}(S)\geq\bar{\gamma}(\Omega)/ (1+\max_{1\leq i\leq l}c_{k_i})\geq \bar{\gamma}(\Omega)/ (1+c)$. This shows that the bound for the case with grouping is tighter than that for the case without grouping. Of course, this is unsurprising, because grouping entails cooperation. Moreover, under the condition that each user has the same action space,  the larger the value of $k_i$, the higher the degree of cooperation, and the tighter the lower bound. 
\end{remark}
\begin{remark}
We point out that each group can be viewed as a new user with vector-valued actions, and a 1/2 bound for the performance of group Nash equilibrium follows from the result of Vetta. But our analysis goes further by defining  the group
curvature $c_{k_i}$ associated with group $i$ with $k_i$ users; in doing so, we obtain a tighter bound, namely $1/(1+\max_{1\leq i\leq l}c_{k_i})$. In the special case where each user has the same action space, then we have that any group Nash equilibrium achieves at least $1/(1+c_{k^*})$ of the optimal social utility, where $k^*$ is the least number of users among the $l$ groups, and the larger the value of $k^*$, the tighter the lower bound.

\end{remark}

\section{Example}
In this section, we consider the application of utility-based maximization in database assisted spectrum access, adopted from \cite{Zhang2014}. We will show that the utility system is valid and the social utility function is submodular. 
We then apply the performance bounds for Nash, social-aware Nash, and group Nash equilibria.

Consider a set of  users $\mathcal{N}=\{1,\ldots, N\}$ and a set of TV channels $\mathcal{M}=\{1,\ldots, M\}$. The users in $\mathcal{N}$ wish to access the TV channels in $\mathcal{M}$, for  purposes other than TV transmissions, in a way that does not unnecessarily disrupt the primary use of these channels, which is for TV transmission. Specifically, to protect the primary TV users, each user $i$ sends a spectrum access request message containing its geo-location information to a geo-location database. In response, the database sends back the set of vacant channels $\mathcal{M}_i\in\mathcal{M}$ and the allowable transmission power level $P_i$. Then each user $i$ chooses a feasible channel $a_i$ from the vacant channel set $\mathcal{M}_i$ for data transmission. When multiple users choose to access the same vacant channel, they might interfere with each other, depending on their relative distance: If the distance between users $m$ and $i$ is $d_{mi}$, interference occurs only if $d_{mi}\leq \delta$, where $\delta$ is a given threshold. The aim is to minimize the total interference which is the sum of interference received by each user. 

For a  collection of selected channels $A=(a_1,\ldots, a_N)\in\prod_{i=1}^N\mathcal{M}_i$, the interference experienced by user $i$ is defined as
\[
I_i(A)=\sum\limits_{ m\in\mathcal{N}_{i}^p}P_md_{mi}^{-\lambda}I_{\{a_i=a_m\}}+\omega_{a_i}^i,
\]
where $\mathcal{N}_i^p$ is the set of users that can interfere with user $i$, $\lambda$ is a path-loss factor, $I_{\{\cdot\}}$ is the indicator function, and $\omega_{a_i}^i$ is the noise including the interchannel interference in channel $a_i$ resulting from primary TV users using other channels.
The private utility function $\alpha_i$ of user $i$ is then defined as 
\[
\alpha_i(A)=-I_i(A)=-\sum\limits_{m\in\mathcal{N}_{i}^p}P_md_{mi}^{-\lambda}I_{\{a_i=a_m\}}-\omega_{a_i}^i.
\] 
This private utility reflects the fact that each user desires to minimize its experienced interference. The social group utility of each user $i$ is defined as 
\[
\eta_i(A)=\alpha_i(A)+\sum\limits_{m\in\mathcal{N}_i^s}w_{im}\alpha_m(A).
\]
Finally, the social utility function is $\gamma(A)=\sum_{i=1}^N\alpha_i(A)$.

\subsection{Nash Equilibria}

\label{subsectionA}

First we will prove that the utility system $(\gamma, \{\alpha_i\}_{i=1}^N)$ satisfies Assumptions~\ref{assumption1} and \ref{assumption2}, and the social utility function $\gamma(A)=\sum_{i=1}^N\alpha_i(A)$ is submodular.

To prove that  the system $(\gamma, \{\alpha_i\}_{i=1}^N)$ satisfies Assumption~\ref{assumption1}, it suffices to prove that for $1\leq i\leq N$, 
\[
\alpha_i(A)\geq \gamma(A)-\gamma(A_{-i}).
\]

By the definition of $\alpha_i(A)$, we have that
\[
\gamma(A)=-\sum\limits_{i=1}^N\sum\limits_{m\in\mathcal{N}_{i}^p}P_md_{mi}^{-\lambda}I_{\{a_i=a_m\}}-\sum\limits_{i=1}^N\omega_{a_i}^i.
\]
Thus,
\begin{align*}
\gamma(A)-\gamma(A_{-i})
&=-\sum\limits_{m\in\mathcal{N}_{i}^p}P_md_{mi}^{-\lambda}I_{\{a_i=a_m\}}-\sum\limits_{n:i\in\mathcal{N}_{n}^p}P_id_{in}^{-\lambda}I_{\{a_n=a_i\}}-\omega_{a_i}^i\\
&= \alpha_i(A)-\sum\limits_{n: i\in\mathcal{N}_{n}^p}P_id_{in}^{-\lambda}I_{\{a_n=a_i\}}\\
&\leq  \alpha_i(A),
\end{align*}
which shows that the utility system $(\gamma, \{\alpha_i\}_{i=1}^N)$ satisfies Assumption~\ref{assumption1}.
Because $\gamma(A)=\sum_{i=1}^N\alpha_i(A)$, the utility system  $(\gamma, \{\alpha_i\}_{i=1}^N)$ also satisfies Assumption~\ref{assumption2}.

Let $A_k=(a_1,\ldots, a_k)$ and $A_l=A_k\oplus (a_{k+1},\ldots, a_{l})$ $(l< N)$. To prove that $\gamma(A)=\sum_{i=1}^N\alpha_i(A)$ is submodular, it suffices to prove that for any $a_{j}\in \mathcal{M}_{j}$ ($l+1\leq j\leq N$),
\[
\gamma_{a_{j}}(A_k)\geq \gamma_{a_{j}}(A_l).
\]
By definition, we have
\begin{align*}
\gamma_{a_{j}}(A_k)
&=\gamma(A_k\oplus {a_{j}})-\gamma(A_k)\\
&=-\sum\limits_{m\in\mathcal{N}_{j}^p,1\leq m\leq k}P_md_{mj}^{-\lambda}I_{\{a_{j}=a_m\}}-\sum\limits_{n:j\in\mathcal{N}_{n}^p,1\leq n\leq k}P_{j}d_{jn}^{-\lambda}I_{\{a_n=a_{j}\}}-\omega_{a_{j}}^{j}
\end{align*}
and
\begin{align*}
\gamma_{a_{j}}(A_l)
&=\gamma(A_l\oplus {a_{j}})-\gamma(A_l)\\
&=-\sum\limits_{m\in\mathcal{N}_{j}^p, 1\leq m\leq l}P_md_{mj}^{-\lambda}I_{\{a_j=a_m\}}-\sum\limits_{n:j\in\mathcal{N}_{n}^p, 1\leq n\leq l}P_{j}d_{jn}^{-\lambda}I_{\{a_n=a_{j}\}}-\omega_{a_{j}}^{j},
\end{align*}
which implies that 
\[
\gamma_{a_{j}}(A_k)\geq \gamma_{a_{j}}(A_l).
\]

We have now established
that the utility system $(\gamma, \{\alpha_i\}_{i=1}^N)$ is valid, and the social utility function $\gamma(A)=\sum_{i=1}^N\alpha_i(A)$ is submodular. This implies that the performance bound in Theorem~\ref{thm:NE} holds.
\subsection{Social-Aware Nash Equilibria}

Let
\[
p=\min\limits_{1\leq j\leq N}\{1+\sum\limits_{i:j\in\mathcal{N}_i^s}w_{ij}\}
\]
Because maximizing $\sum_{i=1}^N\alpha_i(A)$ (with respect to $A\in\mathcal{M}$) is equivalent to maximizing $p\sum_{i=1}^N\alpha_i(A)$, for convenience, we set 
$\gamma(A)=p\sum_{i=1}^N\alpha_i(A)$ when considering the utility system 
$(\gamma, \{\eta_i\}_{i=1}^N)$. 
 
Now prove that the system satisfies  Assumption~\ref{assumption4}.
\begin{align*}
\sum\limits_{i=1}^N\eta_i(A)&=\sum\limits_{i=1}^N\alpha_i(A)+\sum\limits_{i=1}^N\sum\limits_{n:n\in\mathcal{N}_i^s}\omega_{in}\alpha_n(A)\\
&=\sum\limits_{j=1}^N(1+\sum\limits_{i:j\in\mathcal{N}_i^s}w_{ij})\alpha_j(A)\\
&\leq p\sum\limits_{i=1}^N\alpha_i(A).
\end{align*}
This implies that the utility system $(\gamma, \{\eta_i\}_{i=1}^N)$ satisfies Assumption~\ref{assumption4}.

We now prove that the utility system $(\gamma, \{\eta_i\}_{i=1}^N)$ satisfies Assumption~\ref{assumption3}.  By the definition of $\gamma(A)$ and $\eta_i(A)$, we have 
 \begin{align*}
 \begin{split}
 \gamma(A)-\gamma(A_{-i})&=p\left(-\sum\limits_{m\in\mathcal{N}_{i}^p}P_md_{mi}^{-\lambda}I_{\{a_i=a_m\}}
 -\sum\limits_{n:i\in\mathcal{N}_{n}^p}P_id_{in}^{-\lambda}I_{\{a_n=a_i\}}-\omega_{a_i}^i\right)
\end{split}\\
&=p\left(\alpha_i(A)-\sum\limits_{n:i\in\mathcal{N}_{n}^p}P_id_{in}^{-\lambda}I_{\{a_n=a_i\}}\right)\\
&=\alpha_i(A)+\min\limits_{1\leq j\leq N}\{\sum\limits_{i:j\in\mathcal{N}_i^s}w_{ij}\}\alpha_i(A)- p\sum\limits_{n:i\in\mathcal{N}_{n}^p}P_id_{in}^{-\lambda}I_{\{a_n=a_i\}}.
 \end{align*}
 and
 \begin{align*}
 \eta_i(A)&=\alpha_i(A)+\sum\limits_{n:n\in\mathcal{N}_i^s}w_{in}\alpha_n(A).
 \end{align*}
For convenience, we consider the case when the transmission power of all the users are the same (\emph{i.e.}, $P_m=P_n=P$ for any users $m$ and $n$). By Theorem~1 from \cite{Zhang2014}, we have that the social tie between any two users is symmetric (\emph{i.e.}, $w_{nm}=w_{mn}$). Then we can write $p$ and $p(\gamma(A)-\gamma(A_{-i}))$ as follows.
$$p=\min\limits_{1\leq i\leq N}\{1+\sum\limits_{m\in\mathcal{N}_i^s}w_{im}\}$$
and
\begin{align*}
p(\gamma(A)-\gamma(A_{-i}))&=p(\alpha_i(A)-\sum\limits_{m\in\mathcal{N}_i^p}Pd_{mi}^{-\lambda}I_{\{a_i=a_m\}})\\
&=\alpha_i(A)+(\min\limits_{1\leq i\leq N}\sum\limits_{m\in\mathcal{N}_i^s}w_{im})\alpha_i(A)+(-p\sum\limits_{m\in\mathcal{N}_i^p}Pd_{mi}^{-\lambda}I_{\{a_i=a_m\}}).
\end{align*}

So only if 
\begin{equation}
\label{Social-Awarecondition}
\sum\limits_{n:n\in\mathcal{N}_i^s}w_{in}\alpha_n(A)\geq (\min\limits_{1\leq i\leq N}\sum\limits_{m\in\mathcal{N}_i^s}w_{im})\alpha_i(A)-p\sum\limits_{m\in\mathcal{N}_i^p}Pd_{mi}^{-\lambda}I_{\{a_i=a_m\}}
\end{equation}
holds, we have that Assumption~\ref{assumption3} holds.

Finally, we have that $\gamma(A)=p\sum_{i=1}^N\alpha_i(A)$ is submodular because we proved that $\sum_{i=1}^N\alpha_i(A)$ is submodular in Subsection~A.
So we have now established that if the inequality (\ref{Social-Awarecondition}) holds, then the utility system  $(\gamma, \{\eta_i\}_{i=1}^N)$ is valid and the social utility function $\gamma(A)=p\sum_{i=1}^N\alpha_i(A)$ is submodular. This implies that the performance bound for a social-aware Nash equilibrium in Theorem~\ref{thm:SNE} holds.
\subsection{Group Nash Equilibria}

We now partition the set of users $\mathcal{N}=\{1,\ldots, N\}$ into $l$ 
disjoint groups and write, as before,
$\sum_{i=1}^l k_i=N$ and $m_i=\sum_{j=1}^{i-1}  k_j$. 
Group $i$ comprises the users $\{m_i+1,\ldots, m_i+k_i\}$, and the group utility function is $\eta_i(A)=\sum_{j=1}^{k_i}\alpha_{m_i+j}(A)$.
Finally, the social utility is given by $\gamma(A)=\sum_{i=1}^N\alpha_i(A)$.

We now show that the utility system $(\gamma, \{\eta_i\}_{i=1}^N)$ satisfies Assumption~\ref{assumption5}.
Let $A=a^1\oplus\cdots\oplus a^l\in\mathcal{M}$. Then for $1\leq i\leq l$,
\begin{align*}
\gamma(A)-\gamma&(A^{-i})=-\sum\limits_{j=m_i+1}^{m_i+k_i}\sum\limits_{n\in\mathcal{N}_{j}^p}P_nd_{nj}^{-\lambda}I_{\{a_j=a_n\}}\\
&\mbox{}-\sum\limits_{j=m_i+1}^{m_i+k_i}\sum\limits_{n:j\in\mathcal{N}_{n}^p}P_jd_{jn}^{-\lambda}I_{\{a_n=a_j\}}-\sum\limits_{j=m_i+1}^{m_i+k_i}\omega_{a_j}^j\\
&= \eta_i(A)-\sum\limits_{j=m_i+1}^{m_i+k_i}\sum\limits_{n:j\in\mathcal{N}_{n}^p}P_jd_{jn}^{-\lambda}I_{\{a_n=a_j\}}\\
&\leq \eta_i(A),
\end{align*}
which implies that the utility system $(\gamma, \{\eta_i\}_{i=1}^N)$  satisfies Assumption~\ref{assumption5}.

Because $\sum_{i=1}^l\eta_i(A)=\sum_{i=1}^N\alpha_i(A)=\gamma(A)$, we have that the utility system $(\gamma, \{\eta_i\}_{i=1}^N)$  also satisfies Assumption~\ref{assumption6}. Moreover, we have proved
that the social utility $\gamma(A)=\sum_{i=1}^N\alpha_i(A)$ is submodular in Subsection~A.

We have thus established that the utility system $(\gamma, \{\eta_i\}_{i=1}^N) $ is valid and the social utility function $\gamma(A)=\sum_{i=1}^N\alpha_i(A)$ is submodular. This shows that the performance bound for a group Nash equilibrium in Theorem~\ref{thm:GNE} holds.
\begin{remark}
The performance bounds we derive here for Nash equilibria, social-aware Nash equilibria, and group Nash equilibria  are worst-case performance bounds. The fact that the social-aware group Nash equilibrium derived by \cite{Zhang2014} achieves 85\% of the optimal social utility is consistent with our bound.

\end{remark}
\section{Conclusion}
 In this paper, we considered variations of the non-cooperative utility system considered by Vetta, in which users are grouped together. We considered two types of grouping among users in utility systems. The first type of grouping is from \cite{Zhang2014}, where each user belongs to a group of users having social ties with it. For this type of utility system, each user takes its strategy by maximizing its social group utility function, giving rise to the notion of social-aware Nash equilibrium.  We proved that this social utility system yields to the bounding results of Vetta  for non-cooperative system, thus establishing provable performance guarantees for the  social-aware Nash equilibria. For the second type of grouping we considered, the set of users is partitioned into $l$ disjoint groups, where the users within a group takes their group strategy by maximizing their group utility, giving rise to the notion of the group Nash equilibrium.  In this case, each group can be viewed as a new user with vector-valued actions, and a 1/2 bound for the performance of group Nash equilibria follows from the result of \cite{Vetta2002}. By defining the group
curvature $c_{k_i}$ associated with group $i$ with $k_i$ users, we showed that if the social utility function is nondecreasing and submodular, then any group Nash equilibrium achieves at least
$1/(1+\max_{1\leq i\leq l}c_{k_i})$ of the optimal social utility. Especially, if each user has the same action space, then we showed that any group Nash equilibrium achieves at least $1/(1+c_{k^*})$ of the optimal social utility, where $k^*$ is the least number of users among the $l$ groups.
Finally, we presented an example of a utility system for database assisted
spectrum access to illustrate our results.

\section*{Funding}

This work is supported in part by NSF under award CCF-1422658, and by the CSU Information Science and Technology Center (ISTeC).


\begin{thebibliography}{}
\bibitem[Ahmed~\&~Atamt(2011)]{Atamturk2011}
Ahmed, S., \& Atamt{\"u}rk,  A. (2011, June). Maximizing a class of submodular utility functions. \emph{Mathematical Programming}, \emph{128}(1-2), 149--169.
\bibitem[Arslan, Marden, \& Shamma(2007)]{Marden2007}
Arslan, G., Marden, J.~R.,  \& Shamma, J.~S. (2007, September). Autonomous vehicle-target assignment: a game-theoretical formulation. \emph{Journal of Dynamic Systems, Measurement, and Control}, \emph{129}(5), 584--596.
\bibitem[He, Bresler,  Chiang, \&  Rexford(2007)]{Rexford2007}
He, J., Bresler, M., Chiang, M., \& Rexford, J. (2007, June). Towards robust multi-layer traffic engineering: optimization of congestion control and routing. \emph{IEEE Journal on Selected Areas in Communications}, \emph{25}(5), 868--880.
\bibitem[Rowaihy,  Eswaran, Johnson, Verma,  Bar-noy, Brown, \& Porta(2007)]{Portal2007}
Rowaihy, H., Eswaran, S., Johnson, M., Verma,  D., Bar-noy, A., Brown, T., \& Porta,  T.~L.(2007, May 11). A survey of sensor selection schemes in wireless sensor networks. \emph{SPIE Defense and Security Symposium Conference on Unattended Ground, Sea, and Air Sensor Technologies and Applications IX}, \emph{6562}, 
\bibitem[Liu, Chong, \& Scharf(2014)]{Chong2014}
Liu, E., Chong,  E.~K.~P., \& Scharf,  L.~L. (2014, April). Greedy adaptive linear compression in signal-plus-noise models. \emph{IEEE Transactions on Information Theory}, \emph{60}(4), 2269--2280.
\bibitem[La \& Anantharam(2002)]{Anantharam2002}
La, R.~J., \& Anantharam,  V. (2002, April). Utility-based rate control in the internet for elastic traffic. \emph{ IEEE/ACM Transactions on Networking}, \emph{10}(2), 272--286.
\bibitem[Palomar \& Chiang(2007)]{Chiang2007}
Palomar, D.~P., \& Chiang, M. (2007, December). Alternative distributed algorithms for network utility maximization: framework and applications. \emph{ IEEE Transactions on Automatic Control}, \emph{52}(12), 2254--2269.
\bibitem[Chen,  Gong, Yang, \& Zhang(2014)]{Zhang2014}
Chen, X., Gong, X., Yang, L., \& Zhang, J. (2014, April). A social group utility maximization framework with applications in database assisted spectrum access. \emph{IEEE INFOCOM 2014}, 1959--1967.
\bibitem[Wu, Xu, Chen, \&  Lu(2012)]{Lu2012}
Wu, C., Xu, Y., Chen,  Y., \&  Lu, C. (2013, March). Submodular game for distributed application allocation in shared sensor networks. \emph{IEEE INFOCOM 2012}, 127--135.
\bibitem[Nash(1951)]{Nash1951}
Nash, J. (1951). Non-cooperative games. \emph{Annals of Mathematics}, \emph{54}(2), 286--295.
\bibitem[Papadimitriou(2001)]{Papadimitriou2001}
Papadimitriou, C.~H. (2001). Algorithms, games, and the internet. \emph{STOC 2001}, 749--753.
\bibitem[Koutsoupias \& Papadimitriou(2009)]{Papadimitriou2009}
Koutsoupias, E., \& Papadimitriou, C.~H. (2009, May). Worst-case equilibria. \emph{Computer Science Review}, \emph{3}(2), 65--69.
\bibitem[Roughgarden  \& Tardos(2002)]{Tardos2002}
Roughgarden, T., \& Tardos, E. (2002, March). How bad is selfish routing. \emph{Journal of the ACM}, \emph{49}(2), 236--259. 
\bibitem[Vetta(2002)]{Vetta2002}
Vetta, A. (2002). Nash equilibrium in competitive societies, with applications to facility location, traffic routing and auctions. \emph{The 43rd Annual IEEE Symposium on Foundations of Computer Science}, 416--425.
\bibitem[Jawaid  \& Smith(2015)]{Smith2015}
Jawaid, S.~T., \& Smith, S.~L. (2015, November). Submodularity and greedy algorithms in sensor scheduling for linear dynamical systems. \emph{Automatica}, \emph{61}, 282--288.
\bibitem[Zhang,  Chong,  Pezeshki, \& Moran(2016)]{Zhang2016}
Zhang, Z., Chong, E.~K.~P., Pezeshki, A., \& Moran, W. (2016, March). String submodular functions with curvature constraints. \emph{IEEE Transactions on Automatic Control}, \emph{61}(3), 601-616. 
\bibitem[Liu, Chong,  \& Pezeshki(2016)]{Liu2016}
Liu, Y., Chong, E.~K.~P., \& Pezeshki, A. (2016). Performance bounds for the $k$-batch greedy strategy in optimization problems with curvature. \emph{2016 American Control Conference}, 7177--7182.
\bibitem[Liu, Chong, \& Pezeshki(2017)]{Liu2017}
Liu, Y., Chong, E.~K.~P., \& Pezeshki, A. (2017). Performance Bounds with Curvature for Batched Greedy Optimization. \emph{Journal of Optimization Theory and Applications}, to appear, \emph{arXiv:1605.03628 }.
\bibitem[Allen, Lippne, Chen, Fotouhi, Momeni, Yau,  Nowa(2017)]{Allen2017}
Allen, B., Lippner, G., Chen, Y.~T., Fotouhi, B., Momeni, N., Yau, S.~T., \& Nowak, M.~A. (2017). Evolutionary dynamics on any population structure. \emph{Nature}, \emph{544}, 227--230.
\end{thebibliography}
\end{document}